\newcolumntype{H}{>{\setbox0=\hbox\bgroup}c<{\egroup}@{}}
\newtheorem{theorem}{Theorem}[section]
\newtheorem{lemma}[theorem]{Lemma}
\newtheorem{corollary}[theorem]{Corollary}
\newtheorem{proposition}[theorem]{Proposition}
\newtheorem{conjecture}[theorem]{Conjecture}
\newtheorem{notation}[theorem]{Notation}
\theoremstyle{definition}
\newtheorem{definition}[theorem]{Definition}
\newtheorem{problem}{Problem}
\newtheorem{remark}[theorem]{Remark}
\newtheorem{example}[theorem]{Example}
\def\C{{\mathbb C}}
\def\G{{\mathbb G}}
\def\P{{\mathbb P}}
\def\Q{{\mathbb Q}}
\def\R{{\mathbb R}}
\def\Z{{\mathbb Z}}
\def\cD{{\mathcal D}}
\def\cE{{\mathcal E}}
\def\cI{{\mathcal I}}
\def\cK{{\mathcal K}}
\def\cM{{\mathcal M}}
\def\cO{{\mathcal{O}}}
\def\cQ{{\mathcal{Q}}}
\def\cS{{\mathcal S}}
\def\cU{{\mathcal U}}
\def\Q{{\mathbb{Q}}}
\def\G{{\mathbb{G}}}
\def\fh{{\mathfrak h}}
\def\PGL{\operatorname{PGl}}
\def\Mo{\operatorname{\hspace{0cm}M}}
\def\Na{\operatorname{\hspace{0cm}L}}
\def\DA{{\rm A}}
\def\DB{{\rm B}}
\def\DC{{\rm C}}
\def\DD{{\rm D}}
\def\DE{{\rm E}}
\def\DF{{\rm F}}
\def\DG{{\rm G}}
\def\lra{\longrightarrow}
\def\LRa{\hspace{-0.13cm}\Leftrightarrow\hspace{-0.13cm}}
\def\lra{\longrightarrow}
\def\operatorname#1{\mathop{\rm #1}\nolimits}
\def\Pic{\operatorname{Pic}}
\def\rk{\operatorname{rk}}
\def\deg{\operatorname{deg}}
\def\det{\operatorname{det}}
\def\qed{\hspace{\fill}$\rule{2mm}{2mm}$}
\def\r{{\operatorname{r}}}
\def\gd{{\operatorname{g.d.}}}
\def\ed{{\operatorname{e.d.}}}
\def\Cox{{\operatorname{\hspace{0cm}h}}}
\def\NU{{\operatorname{N^1}}}
\def\HH{\operatorname{H\hspace{0.5pt}}}
\def\ngb{\operatorname{ngb}}
\def\dis{\operatorname{dis}}
\def\Coeff{\operatorname{Coeff}}
\newcommand{\ol}[1]{\overline{#1}}
\newcommand{\pb}{\ar@{}[dr]|(.50){\text{\pigpenfont J}}}
 \newcommand*\tl[1]{\mathpalette\wthelper{#1}}
\newcommand*\wthelper[2]{%
        \hbox{\dimen@\accentfontxheight#1%
                \accentfontxheight#11.15\dimen@
                $\m@th#1\widetilde{#2}$%
                \accentfontxheight#1\dimen@
        }%
}
\newcommand*\accentfontxheight[1]{%
        \fontdimen5\ifx#1\displaystyle
                \textfont
        \else\ifx#1\textstyle
                \textfont
        \else\ifx#1\scriptstyle
                \scriptfont
        \else
                \scriptscriptfont
        \fi\fi\fi3
}
\begin{document}

\title[Splitting conjectures for uniform flag bundles]{Splitting conjectures for uniform flag bundles\\ {\normalsize\rm (\today)}}

\author[R. Mu\~noz]{Roberto Mu\~noz}
\address{Departamento de Matem\'atica Aplicada, ESCET, Universidad
Rey Juan Carlos, 28933-M\'ostoles, Madrid, Spain}
\email{roberto.munoz@urjc.es}

\author[G. Occhetta]{Gianluca Occhetta}
\address{Dipartimento di Matematica, Universit\`a di Trento, via
Sommarive 14 I-38123 Povo di Trento (TN), Italy} \thanks{First and third author supported by the Spanish government project MTM2015-65968-P, second and third author supported by PRIN project 2015EYPTSB\_002:  ``Geometria delle variet\`a algebriche''.}
\email{gianluca.occhetta@unitn.it}

\author[L.E. Sol\'a Conde]{Luis E. Sol\'a Conde}
\address{Dipartimento di Matematica, Universit\`a di Trento, via
Sommarive 14 I-38123 Povo di Trento (TN), Italy}
\email{lesolac@gmail.com}

\subjclass[2010]{Primary 14J60; Secondary 14M15, 14M17, 14N15}

\begin{abstract}
We present here some conjectures on the diagonalizability of uniform principal bundles on rational homogeneous spaces, that are natural extensions of classical theorems on uniform vector bundles on the projective space, and study the validity of these conjectures in the case of classical groups.
\end{abstract}

\maketitle

\section{Introduction}\label{sec:intro}

Within the framework of the theory of vector bundles, the concept of uniformity and the problem of explaining its relation with homogeneity arose in the early 1960's, and it is usually attributed to R.L.E. Schwarzenberger, and H. Grauert (see \cite[I,~Section~3]{OSS}, and the references therein). Building upon Grothendieck's theorem on the complete reducibility of vector bundles on the Riemann sphere $\P^1$, it seems natural to start the study of vector bundles on $\P^n$ by focusing on those whose restriction to every line is the same and the simplest examples of the kind are the homogeneous bundles. Obviously, direct sums of line bundles --the so-called {\it diagonalizable} bundles-- are uniform and the main question is to construct non diagonalizable ones. Thanks to \cite{EHS,Sa} we know that the only examples of uniform non diagonalizable bundles of rank smaller than or equal to $n$ on $\P^n$ are constructed upon $T_{\P^n}$ (by twisting and/or dualizing) and so they are homogeneous. Examples of non homogeneous uniform bundles on $\P^n$ are known to exist in higher rank (\cite{Drez80,Elen}, \cite[I,~Theorem~3.3.2]{OSS}).

Similar results have been proved for other particular examples of rational homogeneous manifolds (\cite{Ballico2,Ballico1, Ellia,Guyot, MOS22,MOS1,VdV,Wis2}).  In the archetypal case of Grassmannians,  non diagonalizable uniform vector bundles of the lowest rank are constructed upon universal bundles. In other words, they are constructed upon a principal bundle subjacent to the family of lines on the variety --with respect to which the uniformity is defined--.   

More generally, the natural extension of the concept of ``universal bundle'' to the case of a general rational homogeneous space is not anymore a vector bundle but a $G$-principal bundle 
(for a certain semisimple group $G$), 
upon which one constructs homogeneous vector bundles by means of representations of $G$.
Hence the relation between homogeneity and uniformity should be explained, within the general framework of rational homogeneous spaces and representation theory, in the following terms:

\begin{problem} {\it Classify low rank uniform principal $G$-bundles ($G$ semisimple algebraic group) on rational homogeneous spaces.}
\end{problem}

Among all the possible projective realizations of $G$-principal bundles, we have chosen to work here, as in the preceding article \cite{MOS5}, with the associated $G/B$-bundles (where $B$ is a Borel subgroup of $G$). The reasons are that, on one hand, the $G$-bundle can be reconstructed upon the associated $G/B$-bundle and, on the other, flag varieties $G/B$ have a particularly well known geometry, determined by their rational curves of minimal degree and the combinatorics of the root system associated with $G$. In this paper we will make use of the cohomology ring of $G/B$, that was completely described by Borel in terms of the root system and the Weyl group of $G$ (see Theorem \ref{thm:BGG}). 

In the above setting, one expects the universal bundles of a given rational homogeneous manifold $X$ to provide a certain bound under which the only uniform flag bundles on $X$ are diagonalizable (see Definition \ref{def:diagonalizable}). A natural way of expressing this is to define the rank $\r(\pi)$ of a $G/B$-bundle $\pi:Y\to X$ as the semisimple rank of the group $G$ and $\r(X)$ to be the minimum of $\r$ on the universal bundles on $X$. However, computations suggest that one should use a more subtle way of comparing flag bundles on $X$ with its universal bundles; we then define $\Cox(\pi)$ to be the Coxeter number of the Dynkin diagram of $G$ (see Section \ref{ssec:flagbundles}, and Remark \ref{rem:coxeterandfundamental}), and $\Cox(X)$ to be the minimum of $\Cox$ on the universal bundles on $X$. In either case, examples support the following:

\begin{conjecture}[Splitting conjectures]\label{conj:main}
Let $X$ be a rational homogenous manifold of Picard number one, quotient of a simple algebraic  group $\ol G$ and $\cM$ its family of $\ol G$-isotropic lines. Let $G$ be another simple algebraic group, and let $\pi: Y \to X$ be a $G/B$-bundle, uniform with respect to $\cM$. 
\begin{itemize}
\item[$(\Cox)$]  If $\Cox(X)>\Cox(\pi)$ then $\pi: Y \to X$ is diagonalizable.
\item[$(\r)$]   If $\r(X)>\r(\pi)$ then $\pi: Y \to X$ is diagonalizable.
\end{itemize}
\end{conjecture}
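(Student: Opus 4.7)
The plan is to adapt, in the setting of flag bundles, the classical argument for uniform vector bundles on $\P^n$: reduce the analysis to a single line, use the Grothendieck--Harder theorem to classify the splitting type there, and then exploit the rigidity provided by the combinatorics of the root system of $G$ to force global diagonalizability. The rank/Coxeter number assumption plays the role of the inequality $\rk E \leq n$ in the Elencwajg--Hirschowitz--Schneider theorem on $\P^n$.

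Concretely, I would first fix a line $\ell \in \cM$. Since $\ell \cong \P^1$, the Grothendieck--Harder theorem produces a reduction of the structure group of the underlying $G$-bundle on $\ell$ to a maximal torus $T \subset G$, encoded in a cocharacter $\mu \in X_*(T)$, well defined modulo the Weyl group $W$ and by uniformity independent of $\ell$. Translating this into cohomological data on $Y$ via Theorem \ref{thm:BGG}, every fundamental weight $\omega_i$ of $G$ determines a line bundle $\cL_{\omega_i}$ on $Y$ whose degree along any section of $\pi^{-1}(\ell)\to\ell$ equals the pairing $\langle \omega_i, \mu\rangle$. The hypothesis $\Cox(X)>\Cox(\pi)$ (resp.\ $\r(X)>\r(\pi)$) should then force most of these pairings to vanish, constraining $\mu$ to a cocharacter compatible with the family $\cM$; this part is combinatorial and uses that the universal bundles on $X$ already realize all splittings admissible under the bounds $\Cox(X)$ and $\r(X)$.

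The passage from this per-line rigidity to a global splitting would proceed, following the strategy of \cite{MOS5}, by pulling back $\pi$ to the universal family of $\cM$-lines over $X$. The fibrewise reduction extends, by rigidity, to a reduction over this universal family; then chain connectedness of $X$ by $\cM$ together with Picard number one would let me descend to a reduction of the structure group of $\pi$ to $T$ over $X$, after which diagonalizability is automatic because each character of $T$ induces a line bundle on $X$ which is a power of the ample generator. The hardest step, in my view, is precisely this spreading: showing that the local reductions coalesce into a coherent one and then descend to $X$ is delicate, because distinct lines through a given point may a priori yield different reductions there. Controlling this ambiguity will require a careful analysis of the action of the Weyl group of $\ol G$ on the set of splitting types, together with a case-by-case treatment across the classical groups in which the numerical bound on $\Cox$ or $\r$ is used to exclude the analogue of the twisted tangent bundle obstruction that governs all known counterexamples on $\P^n$.
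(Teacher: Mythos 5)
The statement you are trying to prove is stated in the paper as a \emph{conjecture}: the paper itself only establishes it in special cases, namely $(\Cox)$ when both $\ol G$ and $G$ are of classical type (Theorem \ref{thm:main}) and $(\r)$ when $G$ is of type $\DA_n$ (Proposition \ref{prop:evidenceAk}). Your proposal, as written, does not close that gap, and its two key steps are not carried out. First, the claim that the hypothesis $\Cox(X)>\Cox(\pi)$ (or $\r(X)>\r(\pi)$) ``forces most of the pairings $\langle\omega_i,\mu\rangle$ to vanish'' is unsubstantiated and is not what happens: uniform (indeed diagonalizable, or universal) flag bundles satisfying the numerical hypothesis can have arbitrary nonzero tags on the lines of $\cM$, so no constraint on the splitting cocharacter $\mu$ itself can follow from these inequalities. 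What the tag actually provides is only the subset $I_0$ of nodes where it vanishes, giving the canonical section $s_0:\cU\to Y_{I_0}$ of \cite{MOS5}; the whole content of the problem is elsewhere.

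Second, the step you yourself flag as hardest --- spreading the per-line reduction and descending it to $X$ --- is exactly where the proof must live, and your proposal only asserts it (``by rigidity''), giving no mechanism to rule out that different lines through a point yield incompatible reductions. The paper's partial results handle this via the criterion of Proposition \ref{prop:diagcrit}: diagonalizability follows once the maps $\cM_x\to\cD(r)$ induced by $s_I$ are constant for all $r\in\Delta\setminus I_0$, and constancy is proved by a cohomological obstruction (comparing the effective good divisibility $\ed(\cM_x)$ and $\gd(\cM_x)$ with the Coxeter number of $\cD$, using Borel's presentation and the identification of the invariant generators with Chern classes of the universal bundles, Proposition \ref{prop:classical}). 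This mechanism is intrinsically limited: the paper exhibits nonconstant morphisms $\P^n\to\Q^{3\lfloor n/2\rfloor+1}$ (Example \ref{ex:mapstoquadrics}) showing that such cohomological constancy arguments cannot prove the conjectures in general, which is precisely why they remain conjectures. So your outline neither reproduces the paper's argument for the cases it does prove, nor supplies the missing idea needed beyond them.
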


In this paper we will show that the Splitting conjecture $(\Cox)$ holds in the case in which $\ol{G}$ and $G$ are of classical type (Theorem \ref{thm:main}), and that the Splitting conjecture $(\r)$ holds when $G$ is of type $\DA_n$ (Proposition \ref{prop:evidenceAk}). 

The structure of the paper is the following. In Section \ref{sec:prelim}, after recalling some basic facts on flag bundles and uniformity, we show how their diagonalizability can be written in terms of the constancy of certain maps to rational homogeneous spaces. We then describe the cohomology rings of flags and rational homogeneous manifolds, with particular attention to the classical cases  in Section \ref{sec:cohomflags}; we use this description in Section \ref{sec:morRH} to prove some cohomological restrictions on morphisms to rational homogeneous spaces (Proposition \ref{prop:classical}); summing up, we prove Theorem \ref{thm:main} in Section \ref{sec:main}. The last section is devoted to discussing the Splitting conjecture $(\r)$.

\section{Setup and preliminaries}\label{sec:prelim}

\subsection{Flag bundles}\label{ssec:flagbundles}
Along this section, $X$ will denote a complex projective algebraic variety and $G$ a semisimple Lie group. We will fix a Borel subgroup $B\subset G$. 
A {\it $G/B$-bundle on} $X$ is, by definition, a smooth morphism $\pi:Y\to X$ whose fibers are isomorphic to $G/B$. We will always assume that $X$ is simply connected (which is the case if, for instance, $X$ is Fano), and then, given $\pi$, we may choose (see \cite[Remark 2.1]{OSW}) $G$ to be the identity component of the automorphism group of $G/B$, so that $\pi$ is determined by a cocycle $\theta\in \HH^1(X,G)$. Moreover, $\pi:Y\to X$ may be constructed upon the {\it $G$-principal bundle $\pi_G:E\to X$ associated to }$\theta$, since we may identify $Y$ with the algebraic variety $$E\times^GG/B:=(E\times G/B)/\sim,\qquad (e,gB)\sim(eh,h^{-1}gB),\,\,\forall h\in G.$$ Under this identification, $\pi$ corresponds to the natural map sending the class of $(e,gB)$ to $\pi_G(e)$. 

In a similar way, given any finite dimensional complex linear representation $V$ of $G$, the variety $E\times^GV$  defines a vector bundle over $X$, whose fibers are isomorphic to $V$. Moreover, a projective representation $\P(V)$ of $G$ defines a projective bundle over $X$.

If we consider a maximal torus $H\subset B$, it determines a root system $\Phi$, whose Weyl group $W$ is isomorphic to the quotient ${\rm N}(H)/H$ of the normalizer ${\rm N}(H)$ of $H$ in $G$. Within $\Phi$, $B$ provides a base of positive simple roots $\Delta=\{\alpha_i,\,\, i=1,\dots,n\}$, whose associated reflections we denote by $s_i$. Finally, as usual, we consider the Dynkin diagram $\cD$, whose set of nodes is $\Delta$. When $G$ is simple the nodes of the Dynkin diagram will be numbered as in the standard reference \cite[p.~58]{Hum3} and we will identify each node $\alpha_i\in \Delta$ with the corresponding index $i$. We will say, as usual, $G$ {\it of classical type} (resp. {\it exceptional}) when its Dynkin diagram is of type $\DA_n$, $\DB_n$, $\DC_n$ or $\DD_n$ (resp. $\DE_n$, $\DF_n$ or $\DG_n$). 
The {\em rank} of the semisimple group $G$ is defined as $\rk(G):=\dim H=\sharp(\Delta)$. The order of the composition $s_1\circ \dots\circ s_n\in W$ will be called the {\em Coxeter number} of $\cD$, see \cite[3.18]{Hum3}. 

Finally, we denote by $\NU(Y|X)$ the cokernel of the pullback map $\NU(X)\to \NU(Y)$, between the real vector spaces of classes of $\R$-divisors in $X$ and $Y$. The dimension of $\NU(Y|X)$, which is equal to $\rk(G)$, will be called the {\it rank of} $\pi$. 

\subsection{Partial flag bundles}\label{ssec:partial}
For every subset $I\subset \Delta$  
we may considered a parabolic subgroup $P(I)$ defined by $P(I):=BW(I)B$, where $W(I)\subset W$ is the subgroup of $W$ generated by the reflections $s_i$, $i\in I$. Given a $G/B$-bundle $\pi:Y\to X$ as above, for every such subset $I\subset \Delta$,  there is a factorization:
$$
\xymatrix{Y\ar@/^1pc/[rr]^{\pi}\ar[r]_{\rho_I}&Y_I\ar[r]_{\pi_I}&X}
$$
where $Y_I:=E\times^GG/P(I)$. Note that $\rho_I$ is a flag bundle over $Y_I$, with fibers $P(I)/B\cong G(I)/B(I)$, where $G(I)$ denotes the semisimple group obtained as a quotient of $P(I)$ by its radical (we simply call it the {\it semisimple quotient of} $P(I)$),  and $B(I)$ denotes a Borel subgroup of $G(I)$. 

For $I=\{i\}$, we simply write $Y_i,\rho_i$ and $\pi_i$. The map $\rho_i:Y\to Y_i$ is a $\P^1$-bundle, whose relative canonical divisor and fiber we denote by $K_i$ and $\Gamma_i$, respectively. Note that the relative canonical divisors $K_i$, $i\in\Delta$, form a basis of  
$\NU(Y|X)$, and that the matrix
$
(-K_i\cdot\Gamma_j)
$
is equal to the Cartan matrix of $G$ (cf. \cite[Proposition ~3]{MOSW}).

\subsection{Examples}\label{ssec:univbund} 
The simplest way of constructing $G/B$-bundles over $X$ is by means of a Cartan subgroup $H\subset G$:

\begin{definition}\label{def:diagonalizable}
Let $X$ be an algebraic variety, $\pi:Y\to X$ be a $G/B$-bundle over $X$ defined by a cocycle $\theta\in \HH^1(X,G)$. Then we say that the bundle $\pi$ is {\it diagonalizable} if $\theta$ lies in the image of the natural map $\HH^1(X,H)\to\HH^1(X,G)$, where $H\subset B$ is a maximal torus.
\end{definition}

Note that $\HH^1(X,H)\cong \HH^1(X,\C^*)^n\cong\Pic(X)^n$, hence the choice of $n$ line bundles on $X$ determines  a diagonalizable $G/B$-bundle over $X$. 
Diagonalizable $G/B$-bundles admit $\sharp(W)$ disjoint sections, one for each possible Borel subgroup of $G$ containing $H$. 

On the other hand, the best known non diagonalizable flag bundles are universal bundles, which are defined over rational homogeneous spaces. Let us first introduce some notation, that we will use in the sequel.

\begin{notation}{\rm Let $G$ be a semisimple group with Dynkin diagram $\cD$. It is well known that the projective quotients of $G$ are determined by the different markings of $\cD$. For this reason, we will set:
$$
\cD(\Delta\setminus I):=G/P(I),
$$
for every subset $I$ of the set of nodes $\Delta$ of $\cD$.
}

\end{notation}

\begin{example}\label{ex:univbund} 
Let $X$ be rational homogeneous, of the form $X={G}/{P(I)}=\cD(\Delta\setminus I)$. Denoting by ${B}$ the Borel subgroup of ${G}$ contained in ${P(I)}$, the map $\rho_I:{G}/{B}\to \cD(\Delta\setminus I)$ is a flag bundle, whose fibers are isomorphic to ${G}({I})/B({I})$, where ${G}({I})$ denotes the semisimple quotient of ${P}({I})$, and $B({I})\subset {G}({I})$ is a Borel subgroup. The Dynkin diagram of $G(I)$ is the subdiagram $\cD_I$ of the Dynkin diagram of ${G}$ supported on the set of nodes ${I}$. If $\cD_I$ has $k$ connected components (supported on nodes $I_s\subset I$, $s=1,\dots,k$), this ${G}({I})/B({I})$-bundle will be a fiber product over $X$ of $k$ flag bundles of the form: 
$$
\cD((\Delta\setminus I)\cup I_s)\lra X=\cD(\Delta\setminus I).
$$
Each one of these is called a {\it universal flag bundle over $X$}.

For instance, over $X=\P^m=\DA_m(1)$, the universal flag bundle is just the complete flag bundle associated to $\P(T_X)$; on the Grassmannian $\G(k,m)=\DA_m(k)$ of $k$-dimensional projective subspaces in $\P^m$, we have two universal flag bundles, $$\DA_m(\{i\leq k\})\to \DA_m(k)\quad\mbox{ and }\quad\DA_m(\{i\geq k\})\to \DA_m(k),$$ which are the complete flag bundles associated to the projectivizations of its classical universal subbundle and quotient bundle.
\end{example}

\subsection{Flag bundles on rational curves}

In the case in which $X$ is the projective line $\P^1$, Grothendieck's theorem \cite{Gro1} tells us that $G$-principal bundles depend only on some discrete data. In fact, given a Cartan subgroup $H$ of the semisimple group $G$ this theorem states that every $G$-principal bundle over $\P^1$ is diagonalizable, in the sense that the natural map
$$
\HH^1(\P^1,H)\to \HH^1(\P^1,G)
$$
is surjective. Moreover, this map corresponds precisely to the quotient of $\HH^1(\P^1,H)$ by the natural action of the associated Weyl group $W$.

Now, by considering the exponential map from $\fh$ to $H$, whose kernel is $\Na(H)$, taking into account that $\HH^1(\P^1,\fh)=0$, we have:
$$
\HH^1(\P^1,H)\cong \HH^2(\P^1,\Na(H))\cong\Na(H).
$$

One may interpret this result geometrically on the associated $G/B$-bundles (see \cite[Section 3.3]{OSW}). If $\pi:Y\to \P^1$ is the $G/B$-bundle associated to a cocycle $\theta\in \HH^1(\P^1,H)\cong\Na(H)$, we may consider a minimal section $\Gamma_0$ of $\pi$ over $\P^1$, that is a section whose deformations with a point fixed are trivial. The $n$-tuple $\delta=(d_1,\dots,d_n)$ formed by the integers obtained by intersecting $\Gamma_0$ with the relative canonical divisors $K_j$, $j=1,\dots,n$ is called the {\it tag of the flag bundle} $\pi$, and it determines it completely. 

Since every $K_j$ corresponds to a node of the diagram $\cD$, it makes sense to represent the above data by a {\it tagged Dynkin diagram}, that is, the Dynkin diagram of the Lie algebra decorated with the integer $d_i$ at the node corresponding to $\alpha_i$, for each $i$. The tagged Dynkin diagram determines completely the flag bundle over $\P^1$. 
For instance, the tag of the $\PGL_{m+1}/B$-bundle associated to the projectivization $\P(\cE)$ of a vector bundle $\cE$ on $\P^1$ is computed as the successive differences of the splitting type of $\cE$ (see \cite[Remark 4.1]{MOS5}).

\subsection{Uniformity of flag bundles}

We may now discuss the concept of uniformity of flag bundles on higher-dimensional varieties. This is an extension of a classical concept within the framework of vector bundles (cf. \cite[\S 3]{OSS}), that applies to a triple $(X,\cM,\cE)$, where $X$ is an algebraic variety, $\cM$ is a family of rational curves on $X$, and $\cE$ is a vector bundle on $X$. Then $\cE$ is said to be {\it uniform with respect to $\cM$} if the (isomorphism class of the) pullback of $\cE$ via the normalization of one of the curves of the family does not depend on the chosen curve.

Let us consider a $G/B$-bundle $\pi:Y\to X$ on a projective variety $X$, and a family of rational curves $\cM$ on $X$ (not necessarily complete). Denote by $p:\cU \to \cM$ the universal family of $\cM$ and by $q:\cU\to X$ the evaluation morphism, that we will usually assume to dominate $X$. We may consider the pullback $q^*Y:=Y\times_X\cU$, which is a $G/B$-bundle over $\cU$, whose natural morphism onto $\cU$ will be denoted by $\pi$, by abuse of notation. Then, for every rational curve $\Gamma=p^{-1}(z)\subset\cU$ we may consider the tagged Dynkin diagram of the restriction of $q^*Y$ to $\Gamma$, and pose the following definition:

\begin{definition}\label{def:redu}
Given a  projective variety $X$, a dominating family of rational curves $\cM$ on $X$, and a flag bundle $\pi:Y\to X$, we say that $Y$ is {\it uniform with respect to $\cM$} if the tag of $Y$ on $\Gamma$ is independent of the choice of the curve $\Gamma=p^{-1}(z)$, $z\in\cM$. Obviously, if $\pi$ is uniform, the vector bundles associated to $\pi$, via linear representations of the group $G$, are all uniform as vector bundles.
\end{definition}

The next example shows that universal bundles on rational homogeneous spaces $G/P(I)$ are uniform, with respect to a family of rational curves determined by the group $G$. For simplicity, we present only the case in which $G/P(I)$ has Picard number one.

\begin{example} \label{ex:unifhomog}
Consider a semisimple group ${G}$, and a maximal parabolic subgroup ${P}({I})\subset {G}$, where ${I}:=\Delta\setminus\{{\jmath}\}$, for some node ${\jmath}\in \Delta$  (notation as in Section \ref{ssec:flagbundles}). We have a decomposition:
$$
\Delta=\{{\jmath}\}\sqcup \ngb({\jmath}) \sqcup \dis({\jmath}), 
$$
where $\ngb({\jmath})$ is the set of neighboring nodes of ${\jmath}$ (meaning that they are joined to ${\jmath}$ by an edge of the Dynkin diagram). The homogeneous space $$\cM:=\cD(\ngb({\jmath}))$$ 
parametrizes a family of rational curves in $X:=\cD(\jmath)$, whose universal family and evaluation: 
$$
\xymatrix@C=15mm{\cM&\cU:=\cD(\{{\jmath}\}\sqcup\ngb({\jmath}))\ar[l]_(.70){p}\ar[r]^(.70)q&X}
$$
are the natural morphisms given by the inclusions of the corresponding parabolic subgroups. This is not, in general, a complete family of minimal rational curves in $X$ (it is properly contained in it precisely in the case in which ${\jmath}$ is an exposed short node, see \cite[Theorem 4.3]{LM}), but a closed subvariety. Nevertheless, it is a covering family of minimal rational curves, that we call the  family of {\it ${G}$-isotropic lines in $X$}, with respect to which $X$ is rationally chain connected. As usual, we set $\cM_x:=q^{-1}(x)$, for $x\in X$, which is the rational homogeneous space $\cD_I(\ngb(\jmath))$.
The complete flag manifold $\tl{\cU}:={G}/{B}$ can be considered as a ${G}({I})/{B}({I})$-bundle over $X$ (where ${B}({I})\subset {G}({I})$ denotes the Borel subgroup of ${G}({I})$ contained in ${P}(\dis({\jmath}))$), which is uniform with respect to the family $\cM$; in fact, the tag of $\tl{\cU}\to X$ on every $G$-isotropic curve is given by $d_i=-K_i\cdot\Gamma_{\jmath}$, on every node $i\in I$. 
\end{example}

\subsection{A criterion for diagonalizability of uniform flag bundles}\label{ssec:redunif}

Let $X$ be a Fano manifold of Picard number one and $\pi:Y\to X$ a $G/B$-bundle over $X$, uniform with respect to an unsplit dominating family $\cM$ of rational curves, with tag $\delta=(d_1,\dots d_n)$.
Let us set 
\begin{equation}\label{eq:I0}
I_0:=\left\{i\in \Delta|\,\,d_i=0\right\}\subset \Delta,
\end{equation}
and  denote by $P(I_0)\subset G$ the corresponding parabolic subgroup (so that the fibers of the submersion $\rho_{I_0}:Y\to Y_{I_0}$ are flag manifolds associated to the semisimple group $G({I_0})$, whose Dynkin diagram is $\cD_{I_0}$). 
Then over every rational curve $\Gamma$ of the family we have a well defined trivial proper subflag $F_{I_0}\times\Gamma\subset\pi^{-1}(\Gamma)$, where $F_{I_0}$ is a fiber of $\rho_{I_0}$. This subflag determines a section of the restriction of the pullback $q^*Y_{I_0}\to \cU$ to $\Gamma$, for every $\Gamma$, and one may then prove that all these sections glue together into a section:  
\begin{equation}\label{eq:section0}
s_{0}:\cU\to Y_{I_0}. 
\end{equation}
See \cite[Section 5.1]{MOS5} for details.

\begin{definition}\label{def:decomp2}
With the same notation as above, given any set of nodes $I\subsetneq \Delta$ containing $I_0$, we say that $\pi:Y\to X$ is {\it uniformly reducible with respect to $\cM$ and $I$},  if and only if the composition $s_I:=\rho_{I_0,I}\circ s_{0}$ (where $\rho_{I_0,I}:Y_{I_0}\to Y_I$ denotes the natural projection) factors via $q:\cU\to X$. In other words, if there exists a morphism $\sigma_{I}:X\to Y_I$ such that the following diagram is commutative:
\begin{equation}\label{diag:decomp2}
\xymatrix{&& Y_{I_0} \ar[d]^{\rho_{I_0,I}}\\\cU\ar[rr]^{s_I}\ar[rd]_q\ar[rru]^{s_{0}}&&Y_{I}\ar[dl]_{\pi_I}\\&X\ar@/_3mm/[ur]_{\sigma_I}&}
\end{equation}
Combining \cite[Corollary 5.5]{MOS5} and \cite[Lemma 5.3]{MOS5}, we can state the following:
\end{definition}

\begin{proposition}\label{prop:diagcrit}
Let $X$, $\cM$, $\pi:Y\to X$ and $I_0$ be as above. If $Y$ is uniformly reducible with respect to $\cM$ and $I= \Delta \setminus \{r\}$, for every $r\in\Delta\setminus I_0$, then $\pi$ is diagonalizable. In particular, if $\cM_x$ does not admit non constant maps to $\cD(r)$, $r\in \Delta\setminus I_0$, then $\pi$ is diagonalizable.
\end{proposition}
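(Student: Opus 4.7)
The plan is to deduce the ``in particular'' clause from the main statement first, and then attack the main statement by assembling the various sections $\sigma_I$ into a global reduction of structure group. So before trying to prove diagonalizability directly, I would observe that if $\cM_x$ admits no non-constant morphism to $\cD(r)$, then the restriction of $s_I:=\rho_{I_0,I}\circ s_0$ to an arbitrary fiber $q^{-1}(x)\cong \cM_x$ of $q:\cU\to X$ is a map $\cM_x \to \pi_I^{-1}(x)\cong \cD(r)$, which must be constant by hypothesis. Hence $s_I$ is constant on $q$-fibers and, because $q$ is a good quotient for an unsplit dominating family of rational curves (so the $q$-fibers are connected and cover $X$), it descends to a morphism $\sigma_I:X\to Y_I$ making diagram \eqref{diag:decomp2} commute. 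This reduces the second assertion to the first.

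For the main statement, the key algebraic observation is that in the Weyl group the intersection of parabolic subgroups $W(\Delta\setminus\{r\})$ over $r\in \Delta\setminus I_0$ equals $W(I_0)$, and consequently $P(I_0)=\bigcap_{r\in \Delta\setminus I_0}P(\Delta\setminus\{r\})$. Translating to the associated bundles over $X$, this yields a closed embedding of $X$-schemes
\[
Y_{I_0}\hookrightarrow \prod_{r\in\Delta\setminus I_0} Y_{\Delta\setminus\{r\}}
\]
(fiber product over $X$). The hypothesis of uniform reducibility provides, for each $r\in \Delta\setminus I_0$, a section $\sigma_{\Delta\setminus\{r\}}$ of $\pi_{\Delta\setminus\{r\}}$ such that $\rho_{I_0,\Delta\setminus\{r\}}\circ s_0=\sigma_{\Delta\setminus\{r\}}\circ q$. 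The product of these sections defines $\sigma:X\to \prod_X Y_{\Delta\setminus\{r\}}$, and the equality $s_0=\sigma\circ q$ along the image of $s_0$ forces $\sigma$ to take values in the closed subscheme $Y_{I_0}$. One thus obtains a section $\sigma_{I_0}:X\to Y_{I_0}$, i.e.\ a reduction of structure group from $G$ to $P(I_0)$.

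It remains to upgrade this reduction to a reduction to the maximal torus $H\subset B$. Here the tag condition $d_i=0$ for all $i\in I_0$ enters decisively: pulled back along $\sigma_{I_0}$, the $G(I_0)/B(I_0)$-bundle $\rho_{I_0}:Y\to Y_{I_0}$ restricts on each line $\Gamma$ of $\cM$ to a bundle with trivial tag on $I_0$, i.e.\ it is trivial along every $\Gamma$. Since $\cM$ is dominating and unsplit, standard rigidity arguments (and the Grothendieck description of principal bundles on $\P^1$) force this $G(I_0)$-bundle to be diagonalizable. Combined with the original reduction to $P(I_0)$, this produces a reduction of the full $G/B$-bundle $\pi$ to a torus, which is the required diagonalizability. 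The main technical obstacle in this last step is ensuring that the torus reductions on different lines are compatible globally; this is precisely the content of the cited Corollary~5.5 of \cite{MOS5}, whose proof exploits the rational chain connectedness of $X$ with respect to $\cM$ to propagate a fiberwise reduction into a global one.
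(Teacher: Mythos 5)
The paper itself offers no proof of this proposition---it is stated as a direct combination of \cite[Lemma 5.3]{MOS5} and \cite[Corollary 5.5]{MOS5}---so your argument has to be judged on its own merits. Its first two steps are sound and presumably close to what is in \cite{MOS5}: the restriction of $s_I$ to $q^{-1}(x)\cong\cM_x$ lands in $\pi_I^{-1}(x)\cong\cD(r)$, so constancy of those restrictions plus rigidity for the proper morphism $q$ (connected fibres, $X$ normal) gives the factorization $\sigma_I$; and the sections $\sigma_{\Delta\setminus\{r\}}$ do assemble, via the closed embedding $Y_{I_0}\hookrightarrow\prod_{X}Y_{\Delta\setminus\{r\}}$ coming from $P(I_0)=\bigcap_{r\notin I_0}P(\Delta\setminus\{r\})$ and the surjectivity of $q$, into a section $\sigma_{I_0}$ of $\pi_{I_0}$, i.e.\ a reduction of the structure group to $P(I_0)$. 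The triviality of the induced $G(I_0)$-bundle (zero tag on $I_0$ on every line, plus rational chain connectedness) is also fine.

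The gap is in your final step. A reduction to $P(I_0)$ whose induced $G(I_0)$-bundle is trivial (or diagonalizable) does \emph{not} ``produce a reduction to a torus'': taking the preimage of the trivializing section only reduces the structure group to the radical $R(P(I_0))=Z(L)^{\circ}\ltimes R_u(P(I_0))$ (or to $H\ltimes R_u(P(I_0))$ if you only use diagonalizability of the Levi quotient). This is a solvable group that still contains the entire unipotent radical, and passing from it to $H$ is exactly the nontrivial remaining step: the obstructions live in $\HH^1(X,\cL_\alpha)$ for the line bundles $\cL_\alpha$ associated, through the torus part, to the roots $\alpha$ of $R_u(\fp(I_0))$, and one must prove these groups vanish. (They do here: each such $\cL_\alpha$ has nonzero degree on the lines of $\cM$ because the tags $d_i$ with $i\notin I_0$ are nonzero, so $\cL_\alpha$ or its dual is ample on the Fano manifold $X$ of Picard number one, and Kodaira--Nakano vanishing applies---but this argument is absent from your write-up.) Your description of the residual difficulty as ``ensuring that the torus reductions on different lines are compatible globally'' misidentifies the obstacle, and deferring it to ``the content of Corollary 5.5 of \cite{MOS5}'' is circular, since that corollary is precisely the assertion you set out to prove.
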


\section{Cohomology of rational homogeneous varieties}\label{sec:cohomflags}

In this section we recall some well known facts on the cohomology ring of rational homogeneous varieties. We will always use cohomology with real coefficients, and set $\HH^{\bullet}(M):=\HH^{\bullet}(M, \R),$ for any complex variety $M$.

\subsection{Cohomology of flag varieties}
Let $\NU(G/B)$ be the vector space of linear combinations with coefficients in $\R$ of line bundles on $G/B$ modulo numerical equivalence. It can be endowed with an action of the Weyl group $W$ of $G$, as follows: 
given a Cartan subgroup $H\subset B$, we may consider the group of characters $\Mo(H)$ of $H$; every character corresponds to a line bundle on $G/B$, and we have $\NU(G/B) \cong \Mo(H)\otimes_\Z\R$. Under this identification, the elements $\{-K_1,\dots, -K_n\}$ correspond to a base of positive simple roots $\{\alpha_1,\dots,\alpha_n\}$ of the root system $\Phi\subset \Mo(H)\subset \NU(G/B)$, and the elements of the Weyl group of $\Phi$ provide linear automorphisms of $\NU(G/B)$. Moreover,  $\NU(G/B)$ supports 
a scalar product $(\cdot,\cdot)$ which is $W$-invariant, so that the elements of $W$ are isometries with respect to it. 

The action of $W$ on $\NU(G/B)$ extends to an action of $W$ on the symmetric algebra:
$$S\NU(G/B) =\bigoplus_{k\geq 0}S^k\NU(G/B) ,\mbox{ where }S^0\NU(G/B):=\R.$$
The corresponding invariant subalgebra, denoted by $$ S\NU(G/B)^W=\{p\in S\NU(G/B) |\,\, w(p)=p\mbox{ for all }w\in W\},$$
is known to be a polynomial algebra (see, for instance, \cite[p.~77]{Hi82}), generated by $n=\rk(G)$ homogeneous elements, whose degrees, called {\em fundamental degrees of $G$} (cf. \cite[p.~87]{Hi82}), are uniquely determined by the Weyl group $W$. In the following table we present the fundamental degrees of the Weyl groups of the simple algebraic groups.

\renewcommand*{\arraystretch}{1.2}
\begin{table}[h!]
\centering
\begin{tabular}{r|l}
\hline
$\cD$&Fundamental degrees\\\hline
$\DA_n$&$2,3,\dots,n,n+1$\\
$\DB_n$,$\DC_n$&$2,4,\dots,2(n-1),2n$\\
$\DD_n$&$2,4,\dots,2(n-1),n$\\
$\DE_6$&$2,5,6,8,9,12$\\
$\DE_7$&$2,6,8,10,12,14,18$\\
$\DE_8$&$2,8,12,14,18,20,24,30$\\
$\DF_4$&$2,6,8,12$\\
$\DG_2$&$2,6$\\\hline
\end{tabular} 
\caption{Fundamental degrees of the simple algebraic groups}
\label{tab:fundeg}
\end{table}

Obviously, the set of fundamental degrees of a semisimple group is the union of the set of fundamental degrees of its simple factors. 

\begin{remark}\label{rem:coxeterandfundamental} It is well known that the maximum of the fundamental degrees of $G$ equals the Coxeter number of $\cD$.
\end{remark}

Denoting by $(S\NU(G/B)^W)_+\subset S\NU(G/B)^W$ the subset of invariant polynomials of positive degree, and by $\cI:=\big((S\NU(G/B)^W)_+\big)\subset S\NU(G/B) $ the ideal generated by them, the corresponding quotient $S\NU(G/B) /\cI$ is called the {\it ring of coinvariants of $W$} on $\NU(G/B) $, and we have the following well known result due to Borel (cf. \cite[Proposition 26.1]{Bo}):

\begin{theorem}\label{thm:BGG} With the same notation as above:
$$
\HH^{\bullet}(G/B)\cong \dfrac{S\NU(G/B) }{\cI},
$$
defined by assigning to every $\R$-divisor class $L\in \NU(G/B)$ its corresponding cohomology class $c_1(L)\in \HH^2(G/B)$.
\end{theorem}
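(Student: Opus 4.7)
The plan is to deduce the Borel presentation from a computation in the classifying spaces $BB$ and $BG$. I would consider the fibration
$$
G/B \longrightarrow BB \longrightarrow BG
$$
obtained by quotienting the universal $G$-bundle $EG \to BG$ by the left action of $B$, together with the standard identifications $\HH^\bullet(BB;\R)\cong S\NU(G/B)$ and $\HH^\bullet(BG;\R)\cong S\NU(G/B)^W$ recalled below.

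For the first identification, the Borel subgroup $B$ deformation retracts onto its maximal torus $H$, so $BB \simeq BH$, which is a product of $\rk(G)$ copies of $\C\P^\infty$. This yields
$$
\HH^\bullet(BB;\R)\cong \HH^\bullet(BH;\R)\cong S(\Mo(H)\otimes_\Z\R)\cong S\NU(G/B),
$$
a character $\chi$ being identified with the first Chern class of the associated line bundle $G\times^B\C_\chi$ on $G/B$, viewed as an element in degree $2$. For the second identification, I would use Borel's classical theorem: the edge morphism $\HH^\bullet(BG;\R)\to \HH^\bullet(BH;\R)$ induced by $H\subset G$ is injective with image exactly the $W$-invariant subring, which can be checked via the auxiliary fibration $G/H \to BH \to BG$, whose fiber $G/H$ has cohomology concentrated in even degrees and supports a natural $W$-action on the right.

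Now, by the Chevalley--Shephard--Todd theorem $S\NU(G/B)$ is a free module of rank $|W|$ over $S\NU(G/B)^W$, and the Bruhat decomposition shows that $|W|$ is also the Euler characteristic of $G/B$. Combined with injectivity of the edge homomorphism, this forces the Serre spectral sequence
$$
E_2^{p,q}=\HH^p(BG;\R)\otimes\HH^q(G/B;\R)\Longrightarrow \HH^{p+q}(BB;\R)
$$
to collapse at $E_2$ for dimensional reasons, exhibiting $\HH^\bullet(BB;\R)$ as a free module over $\HH^\bullet(BG;\R)$ with basis lifting any $\R$-basis of $\HH^\bullet(G/B;\R)$. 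Taking the quotient by the ideal generated by $\HH^{>0}(BG;\R)$ then gives
$$
\HH^\bullet(G/B;\R)\cong \HH^\bullet(BB;\R)\otimes_{\HH^\bullet(BG;\R)}\R \cong S\NU(G/B)/\cI,
$$
and tracing the identifications one sees that a line bundle class $L$ maps precisely to $c_1(L)$.

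The main obstacle is the double appeal to Chevalley--Shephard--Todd: once to identify $\HH^\bullet(BG;\R)$ with $S\NU(G/B)^W$, and once again to force the spectral sequence above to degenerate via the freeness of $S\NU(G/B)$ over its invariant subring. An alternative, more algebraic, route would bypass classifying spaces entirely by combining the Bruhat cell decomposition of $G/B$ with Bernstein--Gelfand--Gelfand divided-difference operators to pin down the cup-product structure on Schubert classes, but this produces the same presentation with considerably more bookkeeping and obscures the representation-theoretic origin of the ideal $\cI$.
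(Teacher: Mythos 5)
The paper does not actually prove this statement: Theorem~\ref{thm:BGG} is quoted as a classical result of Borel, with a citation to \cite[Proposition 26.1]{Bo}, so there is no internal proof to compare yours against. Your sketch is the standard classifying-space derivation and is essentially sound: the fibration $G/B\to BB\to BG$, the identifications $\HH^\bullet(BB)\cong\HH^\bullet(BH)\cong S\NU(G/B)$ and $\HH^\bullet(BG)\cong S\NU(G/B)^W$, degeneration of the Serre spectral sequence, and the base-change $\HH^\bullet(BB)\otimes_{\HH^\bullet(BG)}\R\cong S\NU(G/B)/\cI$ is precisely how this presentation is usually obtained (and is close to Borel's original argument). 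Two points deserve sharpening. First, the collapse at $E_2$ is not really ``for dimensional reasons'': it follows from parity, since both $\HH^\bullet(BG)$ and $\HH^\bullet(G/B)$ are concentrated in even degrees (the latter by the Bruhat decomposition, the former by Borel's transgression theorem), so all differentials vanish; the Chevalley--Shephard--Todd freeness is then what you need either to identify the kernel of the fiber-restriction map with $\cI$ via Leray--Hirsch, or, equivalently, to run the dimension count $\dim_\R S\NU(G/B)/\cI=\sharp(W)=\chi(G/B)$. Second, be aware that the step $\HH^\bullet(BG;\R)\cong S\NU(G/B)^W$ is itself the deep classical input (injectivity uses a transfer/Euler-characteristic argument, surjectivity onto the invariants a Poincar\'e-series comparison resting again on Chevalley's theorem), so your proof is a legitimate reduction of one form of Borel's theorem to another rather than a self-contained argument; as a justification of a result the paper treats as known, that is perfectly acceptable.
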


Moreover, given a set of indices $I\subset\{1,\dots,n\}$, Theorem \ref{thm:BGG}, together with Leray--Hirsch theorem provides the following description of the cohomology ring of the rational homogeneous space $\cD(\Delta\setminus I)$:

\begin{corollary}\label{cor:BGG}
With the same notation as above:
$$
\HH^{\bullet}(\cD(\Delta\setminus I))
\cong \left(\dfrac{S\NU(G/B) }{\cI}\right)^{W(I)}\cong \dfrac{S\NU(G/B)^{W(I)} }{\cI}.$$
\end{corollary}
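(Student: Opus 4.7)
The plan is to deduce the corollary from Borel's Theorem \ref{thm:BGG} in two steps: the first geometric (Leray--Hirsch applied to $\rho_I$), and the second algebraic (Reynolds averaging over $W(I)$).

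For the first isomorphism, consider the natural projection $\rho_I\colon G/B\to \cD(\Delta\setminus I)=G/P(I)$. This is a Zariski locally trivial fibration whose fiber is the complete flag variety $P(I)/B\cong G(I)/B(I)$. Since the restriction $\HH^\bullet(G/B)\to\HH^\bullet(P(I)/B)$ is surjective (both rings are described by Theorem \ref{thm:BGG}, and the generators on the fiber all lift to $G/B$), Leray--Hirsch applies and yields an injection $\rho_I^*\colon\HH^\bullet(\cD(\Delta\setminus I))\hookrightarrow\HH^\bullet(G/B)$. Under the identification of Theorem \ref{thm:BGG}, the simple reflections $s_i$ for $i\in I$ lift to elements of $N(H)\cap P(I)$, whose action on $G/B$ preserves the fibers of $\rho_I$; thus the induced $W(I)$-action on $\HH^\bullet(G/B)$ fixes every class pulled back from the base, giving
\[
\rho_I^*\HH^\bullet(\cD(\Delta\setminus I))\subseteq\bigl(S\NU(G/B)/\cI\bigr)^{W(I)}.
\]
Equality follows from a dimension count: by the Bruhat decomposition $\dim_\R\HH^\bullet(\cD(\Delta\setminus I))=|W/W(I)|$, and by Chevalley the $W$-module $S\NU(G/B)/\cI$ is isomorphic (as an ungraded representation) to the regular representation of $W$, whose $W(I)$-invariant subspace has the same dimension $|W|/|W(I)|$.

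For the second isomorphism, interpret the right-hand side as $S\NU(G/B)^{W(I)}/\bigl(\cI\cap S\NU(G/B)^{W(I)}\bigr)$, i.e.\ the quotient by the ideal of $S\NU(G/B)^{W(I)}$ generated by $(S\NU(G/B)^W)_+$. The inclusion $S\NU(G/B)^{W(I)}\hookrightarrow S\NU(G/B)$ composed with the projection modulo $\cI$ lands in the $W(I)$-invariants, and the resulting map is visibly injective. Surjectivity uses the Reynolds operator $R_I:=|W(I)|^{-1}\sum_{w\in W(I)}w$, an $S\NU(G/B)^{W(I)}$-linear retraction $S\NU(G/B)\to S\NU(G/B)^{W(I)}$ that preserves the $W$-stable ideal $\cI$: if $\bar{f}\in(S\NU(G/B)/\cI)^{W(I)}$, then $\bar{f}=\overline{R_I(f)}$. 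The same averaging shows $\cI\cap S\NU(G/B)^{W(I)}=(S\NU(G/B)^W)_+\cdot S\NU(G/B)^{W(I)}$: a $W(I)$-invariant element written as $f=\sum_j g_jh_j$ with $g_j\in(S\NU(G/B)^W)_+$ and $h_j\in S\NU(G/B)$ satisfies $f=R_I(f)=\sum_j g_j R_I(h_j)$, and $R_I(h_j)\in S\NU(G/B)^{W(I)}$.

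The main obstacle is the dimensional comparison in the first step, which relies on Chevalley's classical identification of the $W$-representation on $\HH^\bullet(G/B)$ with the regular representation; once this is taken for granted, the rest is formal. The second step is routine Reynolds-operator manipulation, provided one keeps careful track of the two meanings of ``$\cI$'': as an ideal of $S\NU(G/B)$, and as the induced ideal in the invariant subring $S\NU(G/B)^{W(I)}$.
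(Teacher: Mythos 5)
Your proposal is correct and follows essentially the same route the paper takes, namely deducing the corollary from Borel's Theorem \ref{thm:BGG} together with Leray--Hirsch (the paper states this in one line and omits the details you supply: the dimension count via Bruhat and Chevalley's regular-representation description of the coinvariant algebra, and the Reynolds-operator identification of the two quotients). One small imprecision: left translation by a representative of $s_i$ in $N(H)\cap P(I)$ is homotopically trivial on $G/B$ (and does not preserve the fibers of $\rho_I$ anyway), so to see that pullback classes are $W(I)$-invariant one should realize the $W$-action geometrically by right translations on $K/T$ for a maximal compact $K$, where representatives of $W(I)$ lying in $K\cap P(I)$ do preserve the fibers of $K/T\to K/(K\cap P(I))$.
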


\subsection{Cohomology ring of rational homogeneous spaces of Picard number one: classical groups}\label{ssec:picardone}

In the case in which $G$ is of classical type, using Theorem \ref{thm:BGG} and Corollary \ref{cor:BGG} one may easily write explicit presentations of the cohomology ring of the corresponding Picard number one varieties, in terms of symmetric polynomials. 

Following \cite[p. 64]{Hum2} one can choose elements $x_1, \dots,x_{n+\epsilon}\in N^1(G/B)$ ($\epsilon=1$ when $\DA_n$ or $\epsilon=0$ when $\DB_n$, $\DC_n$ or $\DD_n$) , satisfying the relations with the roots $\alpha_j$ displayed in Table \ref{tab:bases}.

\begin{table}[h!]\centering
\begin{tabular}{|c|c|}\hline
$\cD$ &  Relation \\ \hline
& \\[-12pt] 
$\DA_n$& $\alpha_i=x_i-x_{i+1}$ \\[3pt]
$\DB_n$& $\alpha_i=x_i-x_{i+1}, \; i<n$, $\alpha_n=x_n$ \\[2pt]
$\DC_n$& $\alpha_i=x_i-x_{i+1}, \; i<n$, $\alpha_n=2x_n$ \\[2pt]
$\DD_n$& $\alpha_i=x_i-x_{i+1}, \; i<n$, $\alpha_n=x_{n-1}+x_{n}$ \\[2pt]
\hline
\end{tabular}\caption{Relation between $\alpha_i$'s and $x_j$'s}
\label{tab:bases}
\end{table}
In the cases $\DB_n,\DC_n,\DD_n$ the $x_i$'s form a basis of $N^1(G/B)$, whilst in the case $\DA_n$, they are a system of generators satisfying the linear relation $\sum_ix_i=0$.
For every type, the reflection $s_i$ corresponding to $\alpha_i$ can now be described as the transposition $(x_i,x_{i+1})$, for $i<n$.
The reflection $s_n$ is the transposition $(x_n,x_{n+1})$ in the case $\DA_n$, the change of sign of $x_n$ in the case $\DB_n$ and $\DC_n$, and the simultaneous change of sign of $x_{n-1}$ and $x_n$ in the case $\DD_n$.

Theorem \ref{thm:BGG} tells us then that we may write:
$$
\HH^\bullet(G/B)=\R[x_1, \dots,x_{n+\epsilon}]/\cI,
$$
If we set $\eta_n=x_1 \cdots x_n$ and define the polynomials 
$$Q(t) = \prod_{i=1}^{n+1}(1+tx_i), \qquad K(t) = \prod_{i=1}^n(1-t^2x_i^2),$$
whose coefficients are the elementary symmetric polynomials in $x_1, \dots, x_{n+1}$ and, up to a sign, the elementary symmetric polynomials in $x_1^2, \dots, x_{n}^2$, then we can describe generators of $\cI$ as shown in Table \ref{tab:coho} (in which $\Coeff_+(p(t))$ stands for the set of coefficients of positive degree of the polynomial $p(t)$ in the variable $t$, and $\Sigma_n$ for the symmetric group of degree $n$); note that the coefficient of maximal degree of $K(t)$ is equal to $(-1)^n\eta_n^2$.

\begin{table}[h]\centering
\begin{tabular}{|c|c|c|c|}\hline
$\cD$ &  $W$ & $\cI$ & Generators\\ 
\hline 
&&& \\[-9pt]
$\DA_n$& $\Sigma_{n+1}$& $\big(\R[x_1, \dots, x_{n+1}]^{\Sigma_{n+1}}_+\big)$ & $\Coeff_+(Q(t))$ \\[7pt] 
$\DB_n$, $\DC_n$& $\Sigma_n \ltimes \Z_2^n$ & $\big(\R[x_1^2, \dots, x_{n}^2]^{\Sigma_{n}}_+\big)$ & $\Coeff_+(K(t))$\\[7pt] 
$\DD_n$ & $\Sigma_n  \ltimes \Z_2^{n-1}$ &$\big(\R[x_1^2, \dots, x_{n}^2]^{\Sigma_{n}}_+, \eta_n=x_1 \cdots x_n\big)$& $\Coeff_+(K(t))\cup\{\eta_n\}$\\[3pt] 
\hline
\end{tabular} 
\caption{Cohomology rings of complete flags}
\label{tab:coho}
\end{table}


According to Corollary \ref{cor:BGG},  given $r\in\Delta$, and setting $I=\Delta\setminus\{r\}$, the cohomology of the Picard number one variety $\cD(r)$ 
is the quotient modulo $\cI$ of the subring of $\R[x_1, \dots,x_{n+\epsilon}]$ invariant by the subgroup $W(I)$ of $W$ generated by all the reflections $s_i$, $i\neq r$. Note that the varieties $\DD_n(n-1)$ and $\DD_n(n)$ are isomorphic; hence we may assume that $\cD(r)\neq \DD_n(n-1)$. Then $W(I)$ is the product of the two subgroups $LW(I),RW(I)\subset W(I)$, generated by $\{s_i,\,\,i<r\}$ and $\{s_i,\,\,i>r\}$, respectively, and we have a decomposition:
$$
\R[x_1, \dots,x_{n+\epsilon}]^{W(I)}\cong\R[x_1, \dots,x_{r}]^{LW(I)}\otimes\R[x_{r+1}, \dots,x_{n+\epsilon}]^{RW(I)}.
$$  
Note also that $LW(I)\cong\Sigma_{r}$, whilst the group $RW(I)$ is equal to $\Sigma_{n-r+1}$ (case $\DA_n$), $\Sigma_{n-r}\ltimes \Z_2^{n-r}$ (cases $\DB_n$, $\DC_n$), or $\Sigma_{n-r}\ltimes \Z_2^{n-r-1}$ (case $\DD_n$).

Let us denote by $e_i$ the symmetric elementary polynomial of degree $i$. A set of generators of the  polynomials invariant by $W(I)$ is given in Table \ref{tab:invariant}:

\begin{table}[h!]\centering
\begin{tabular}{|c|c|c|}\hline
Variety  & Invariant polynomials & $i$\\ \hline
& &\\[-12pt] 
\multirow{2}{*}{$\DA_n(r)$}&$q_i:=e_i(x_1, \dots, x_r)$& $1, \dots, r$  \\[2pt]
&$s_{i}= e_i(x_{r+1}, \dots, x_{n+1})$&$1, \dots, n-r+1$\\[3pt]\hline
& &\\[-12pt]
\multirow{2}{*}{$\DB_n(r),\DC_n(r)$}&$q_i:=e_i(x_1, \dots, x_r)$& $1, \dots, r$ \\[2pt]
&$k_{2i}= (-1)^ie_i(x_{r+1}^2, \dots, x_n^2)$&$1, \dots, n-r$\\[3pt]\hline
& &\\[-12pt]
\multirow{3}{*}{$\DD_n(r)$ ($r\neq n-1$)}&$q_i:=e_i(x_1, \dots, x_r)$&$1, \dots, r$\\[2pt]
&$k_{2i}= (-1)^ie_i(x_{r+1}^2, \dots, x_n^2)$&$1, \dots, n-r-1$\\[2pt]
&$\eta_{n-r} = x_{r+1}\cdots x_n$&\\[3pt]
\hline
\end{tabular}\caption{$W(I)$-invariant polynomials, for $I=\Delta\setminus\{r\}$}
\label{tab:invariant}
\end{table}
%
%
%
%
%
%
%

Set $q_0,s_0,k_0:=1$, and, in the case $\DD_n(r)$, $k_{2(n-r)}:=(-1)^{n-r}\eta_{n-r}^2$, and define the polynomials $$q(t) = \sum_{i=0}^rq_{i}t^{i},\qquad s(t) = \sum_{i=0}^{n-r}s_{i}t^{i}, \qquad k(t)= \sum_{i=0}^{n-r}k_{2i}t^{2i}.$$ 
We have, in the case $\DA_n(r)$:
$$Q(t)=\prod_{i=1}^r(1+tx_i)\prod_{i=r+1}^{n+1}(1+tx_i)= q(t)s(t)$$
In the cases $\DB_n(r)$, $\DC_n(r)$ and $\DD_n(r)$:  
$$K(t) =\prod_{i=1}^r(1+tx_i)\prod_{i=1}^r(1-tx_i)\prod_{i=r+1}^{n}(1-t^2x_i^2)=q(t)q(-t)k(t)$$

So we get the presentation for the cohomology rings described in Table \ref{tab:picardonerings}.
\begin{table}[h]\centering
\begin{tabular}{|c|c|c|}\hline
Variety &  Cohomology ring \\ \hline 
& \\[-9pt]
$\DA_n(r)$& $\HH^\bullet(X)=  \dfrac{ \R[q_1,\dots,q_r,s_{1},\dots,s_{n-r+1}]}{\big(\Coeff_+(q(t)s(t))\big)}$\\[12pt] 
$\DB_n(r)$, $\DC_n(r)$ & $\HH^\bullet(X)=  \dfrac{ \R[q_1,\dots,q_r,k_{2},\dots,k_{2(n-r)}]}{\big(\Coeff_+ (q(t)q(-t) k(t))\big)}$ \\[12pt] 
$\DD_n(r)$ ($r\not=n-1$)&  $\HH^\bullet(X)=  \dfrac{ \R[q_1,\dots,q_r,k_{2},\dots,k_{2(n-r-1)}, \eta_{n-r}]}{\big(\Coeff_+ (q(t)q(-t) k(t))\cup\{q_r\eta_{n-r}\}\big)}$\\[12pt] 
\hline
\end{tabular}\caption{Cohomology rings}
\label{tab:picardonerings}
\end{table}

\subsection{Geometrical interpretation}\label{ssec:geominter}

Let $V$ be the natural representation of the Lie algebras of type $\cD=\DA_n,\DB_n,\DC_n,\DD_n$ (which has dimension $N=n+1,2n+1,2n$, and $2n$, respectively), and let $\P(V)$ be its Grothendieck projectivization. We fix an index $r\in\Delta$, and assume, as usual $\cD(r)\neq\DD_n(n-1)$. Then, every variety $\cD(r)$ can be embedded in the Grassmannian of $(r-1)$-dimensional projective subspaces in $\P(V)$, $\G(r-1,\P(V))$. Note that the embedding is given by the ample generator of $\Pic(\cD(r))$, except in the cases $\DB_n(n)$ and $\DD_n(n)$ (in which the embedding is given by the second tensor power of it). In any case, the restriction to $\cD(r)$ of the corresponding universal quotient bundle, $\cQ$, provides a surjective map:
$$
\cO_{\cD(r)}\otimes V\lra\cQ,
$$
whose kernel we denote by $\cS^\vee$.

In the cases $\DB_n$, $\DD_n$ (respectively, $\DC_n$), $V$ supports a nondegenerate quadratic (respectively, skew-symmetric) form $\omega:V^\vee\to V$, with respect to which the vector subspaces parametrized by $\cD(r)$ are isotropic. In other words, the composition:
$$
\cQ^\vee\lra\cO_{\cD(r)}\otimes V^\vee\lra \cO_{\cD(r)}\otimes V\lra \cQ
$$
is zero, and we have a surjection $\cS\lra \cQ$, whose kernel we denote by $\cK$. Summing up, we have a commutative diagram, with short exact rows and columns:

\begin{equation}
\xymatrix@R=10mm{\cQ^\vee\ar@{=}[r]\ar[d]&\cQ^\vee\ar[d]\ar[r]&0\ar[d]\\\cS^\vee\ar[r]\ar[d]&\cO\otimes V^\vee\stackrel{\omega}{\cong} \cO\otimes V\ar[d]\ar[r]&\cQ\ar@{=}[d]\\\cK^\vee\cong\cK\ar[r]&\cS\ar[r]&\cQ}
\label{eq:diagram1}
\end{equation} 

Note that, setting $I=\Delta \setminus \{r\}$, $I_+:=\{i\in\Delta|\,\,i> r\}$, $q_i=e_i(x_1,\dots,x_r)$  as in Table  \ref{tab:invariant}, and recalling that: 
$$
\HH^\bullet(G/P(I))=\dfrac{\R[q_1, \dots,q_{r}]\otimes \R[x_{r+1}, \dots,x_{n+\epsilon}]^{RW(I)}}{\cI},
$$
Corollary \ref{cor:BGG} allows us to write:
$$
\begin{array}{rl}\vspace{7pt}
\HH^\bullet(G/P(I_+))\hspace{-0.2cm}&\cong\dfrac{\R[x_1, \dots,x_{r}]\otimes\R[x_{r+1}, \dots,x_{n+\epsilon}]^{RW(I)}
}{\cI}\\\vspace{7pt}
&\cong\dfrac{\R[q_1, \dots,q_{r}]\otimes\R[x_{r+1}, \dots,x_{n+\epsilon}]^{RW(I)}\otimes \R[x_1, \dots,x_{r}]
}{\cI+\big(\{e_i(x_1,\dots,x_r)-q_i\}_i\big)}\\\vspace{7pt}
&\cong \dfrac{\HH^\bullet(G/P(I))\otimes\R[x_1, \dots,x_{r}]}{\big(\{e_i(x_1,\dots,x_r)-q_i\}_i\big)}.
\end{array}
$$
The bundle $\rho_{I_+,I}:G/P(I_+)\lra G/P(I)$ is the complete flag bundle (of type $\DA_{r-1}$) associated with the Grothendieck projectivization of the universal bundle $\cQ$ on $G/P(I)$. The following statement is based on the well known splitting principle for vector bundles (see for instance \cite[Section~5]{Fult92}): 

\begin{proposition}\label{prop:qsarechern}
The elements $q_i\in\HH^\bullet(G/P(I))$ are the  Chern classes of the universal quotient bundle $\cQ$  on $G/P(I)$. 
\end{proposition}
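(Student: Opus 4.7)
The plan is to lift the identity to the complete flag bundle of $\cQ$ and apply the splitting principle. In group-theoretic terms, passing from $P(I)$ to $P(I_+)$ replaces the ``left block'' Levi factor of type $\DA_{r-1}$ by a Borel while keeping the right block intact; geometrically, this realizes $\rho_{I_+,I}: G/P(I_+)\to G/P(I)$ as the complete Grothendieck flag bundle of the rank $r$ universal quotient $\cQ$. Since the fibers are flag varieties of type $\DA_{r-1}$ (hence have no odd cohomology), Leray--Hirsch guarantees that the pullback $\rho_{I_+,I}^*$ on cohomology is injective, so it suffices to check the equality $\rho_{I_+,I}^* c_i(\cQ) = q_i$ inside $\HH^\bullet(G/P(I_+))$.

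On $G/P(I_+)$ the pullback of $\cQ$ acquires a canonical complete filtration
$$
0=\cQ_0\subset \cQ_1\subset\cdots\subset \cQ_r=\rho_{I_+,I}^*\cQ,
$$
whose successive quotients $\cL_j:=\cQ_j/\cQ_{j-1}$ are the line bundles corresponding to the characters of the maximal torus $H$ acting on a chosen complete flag inside the $r$-dimensional quotient of the standard representation $V$ parametrized by $\cD(r)$. Under the identification $\NU(G/B)\cong \Mo(H)\otimes_\Z\R$ fixed in Section \ref{ssec:picardone}, the classes $c_1(\cL_j)$ coincide, up to reordering, with the weights $x_1,\dots,x_r$ of Table \ref{tab:bases}. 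The splitting principle then yields
$$
\rho_{I_+,I}^* c(\cQ) \;=\; \prod_{j=1}^r\bigl(1+c_1(\cL_j)\bigr) \;=\; \prod_{j=1}^r(1+x_j),
$$
so that $\rho_{I_+,I}^* c_i(\cQ)=e_i(x_1,\dots,x_r)=q_i$, and the result follows by injectivity of the pullback. Note that the classical type only affects the right block $RW(I)$, which acts trivially on $x_1,\dots,x_r$, so the argument treats $\DA_n,\DB_n,\DC_n,\DD_n$ in a uniform way.

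The main technical point is the identification of the line bundles $\cL_j$ with the weights $x_1,\dots,x_r$. This amounts to reconciling the Grothendieck convention for $\P(V)$ (which fixes that $\cQ$, and not its dual, is the rank $r$ bundle) with the sign and ordering conventions governing Table \ref{tab:bases}; once this dictionary is spelled out explicitly, the splitting principle delivers the conclusion immediately.
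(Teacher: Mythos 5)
Your skeleton is the same as the paper's: realize $\rho_{I_+,I}:G/P(I_+)\to G/P(I)$ as the complete flag bundle of $\cQ$, filter $\rho_{I_+,I}^*\cQ$ with line-bundle graded pieces, and reduce the claim, via injectivity of $\rho_{I_+,I}^*$ on cohomology, to the identity $\prod_j\bigl(1+c_1(\cL_j)t\bigr)=\prod_j(1+x_jt)$. The problem is that the one step you defer -- the identification $c_1(\cL_j)=x_j$ ``up to reconciling conventions'' -- is not a routine dictionary-check: it is essentially the entire content of the proposition, and as written your argument leaves it unproved. In particular the sign and normalization are genuinely at stake (an equivariant-weight convention off by a dual gives $c_i(\cQ)=(-1)^iq_i$ instead of $q_i$), so ``the splitting principle delivers the conclusion immediately'' only after the verification you are omitting has actually been carried out.

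Moreover, your closing remark that the classical types are handled ``in a uniform way'' because $RW(I)$ acts trivially on $x_1,\dots,x_r$ glosses over exactly the cases where care is needed, and where the paper's proof does real work. The paper proves $L_i-L_{i-1}=x_i$ (with $L_i=c_1(\cQ_i)$ the determinants of the partial quotients) by computing the intersection numbers $L_i\cdot\Gamma_j$ against the fibers of the elementary contractions of $G/B$ and then inverting against the Cartan matrix; these numbers are \emph{not} uniformly $\delta_{ij}$: one gets $L_i\cdot\Gamma_j=2$ for $(\cD,i,j)=(\DB_n,n,n),(\DD_n,n,n)$ (the embeddings of $\DB_n(n)$ and $\DD_n(n)$ are by the \emph{square} of the ample generator, and the $x_i$ are half-spin-type weights), and $L_i\cdot\Gamma_j=1$ for $(\DD_n,n-1,n)$. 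In addition, for $\cD(r)=\DD_n(n)$ the rank-$(n-1)$ step of your filtration is \emph{not} pulled back from $\DD_n(n-1)$ (that variety is the other spinor variety, not the Grassmannian of isotropic $(n-1)$-planes), but from $\DD_n(n)\hookrightarrow\G(n-2,\P(V))$. To close the gap you must either fix an equivariant convention and compute the torus weights of the graded pieces honestly in each type (including the half-integral spin cases and the sign), or argue as the paper does by computing degrees on the curves $\Gamma_j$.
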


\begin{proof}
%
%
Following \cite[Section~5]{Fult92}, the pullback bundle $\rho_{I_+,I}^*(\cQ)$ fits in a sequence of vector bundles and surjective maps
$$
\cQ_r:=\rho_{I_+,I}^*(\cQ)\lra\cQ_{r-1}\lra\dots\lra\cQ_{1}\lra\cQ_{0}:=0,
$$
where, for every $i<r$, the bundle $\cQ_i$ is defined as the pullback to $G/P(I_+)$ of:
\begin{itemize}
\item the universal quotient bundle on $\DD_n(n)\hookrightarrow\G(n-2,\P(V))$, if $(\cD(r),i)=(\DD_n(n),n-1)$, or
\item the universal quotient bundle on $\cD(i)\hookrightarrow\G(i-1,\P(V))$, otherwise.
\end{itemize}
From the sequence, we have an equality of Chern polynomials:
$$
c_t(\rho_{I_+,I}^*(\cQ))=\prod_ic_t(\ker(\cQ_i\to\cQ_{i-1}))=\prod_i\big(1+(c_1(\cQ_i)-c_1(\cQ_{i-1}))t\big).
$$
We consider now the fibers $\Gamma_j$ of the elementary contractions of the flag manifold $G/B$ (see Section \ref{ssec:partial}). By construction, the classes $L_i:=c_1(\cQ_i)$ satisfy 
$$L_i \cdot \Gamma_j=\left\{\begin{array}{l}2, \,\,\,\,\,\quad\mbox{if }(\cD,i,j)= (\DB_n,n,n),(\DD_n,n,n),\\
1, \,\,\,\,\,\quad\mbox{if }(\cD,i,j)= (\DD_n,n-1,n),\\
\delta_{i,j},\,\,\,\,\,\mbox{otherwise}.
\end{array}\right.$$ 
In the case $\DA_n$, we also set $L_{n+1}:=0$. From this data one can write the $\alpha_i$'s in terms of the $x_j$'s, by means of Table \ref{tab:bases}, and in terms of the $L_j$'s using the fact that $\alpha_i\cdot\Gamma_j=-K_i\cdot\Gamma_j$ is the coefficient in the position $(i,j)$ of the Cartan matrix of $\cD$, to conclude that $L_i-L_{i-1}=x_i$ for every $i$. 
For instance, in the case $\DD_n(n)$, this follows from the equality:
$$
\left(\begin{array}{ccccc}1&-1&\ldots&0&0\\0&\ddots&\ddots&&0\\\vdots&&\ddots&\ddots&\vdots\\0&\ldots&\ldots&1&-1\\0&\ldots&\ldots&1&1\end{array}\right)\hspace{-0.15cm}\left(\hspace{-0.2cm}\begin{array}{c}x_1\\\vdots\\\vdots\\x_{n-1}\\x_n\end{array}\hspace{-0.2cm}\right)\hspace{-0.05cm}=\hspace{-0.05cm}
\left(\begin{array}{ccccc}2&-1&\ldots&0&0\\-1&\ddots&\ddots&&0\\\vdots&\ddots&2&-1&-1\\0&\ldots&-1&2&0\\0&\ldots&-1&0&2\end{array}\right)\hspace{-0.15cm}
\left(\hspace{-0.2cm}\begin{array}{c}L_1\\\vdots\\\vdots\\L_{n-1}-\frac{1}{2}L_n\\\frac{1}{2}L_n\end{array}\hspace{-0.2cm}\right)
$$

Now one may finally assert that the Chern classes of $\rho_{I_+,I}^*(\cQ)$ are the elementary symmetric polynomials in the elements $x_i$, from which the statement follows.
\end{proof}

\begin{corollary}\label{cor:qsarechern}
With the same notation as above:
$$
q(t)=c_t(\cQ),\quad s(t)=c_t(\cS^\vee),\quad\mbox{and, \,\,for $\cD\neq\DA_n$, \,\,} k(t)=c_t(\cK).
$$
\end{corollary}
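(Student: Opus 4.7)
My plan is to combine the short exact sequences encoded in diagram \eqref{eq:diagram1} with the cohomological presentations of Table \ref{tab:picardonerings}, using Proposition \ref{prop:qsarechern} as the base case. Since that proposition already establishes $q(t)=c_t(\cQ)$, the remaining task is to match $s(t)$ with $c_t(\cS^\vee)$ (in the $\DA_n$ case) and $k(t)$ with $c_t(\cK)$ (in the other three classical types).

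First, reading off the middle row of \eqref{eq:diagram1}, the sequence $0\to\cS^\vee\to\cO_{\cD(r)}\otimes V\to\cQ\to 0$ together with the triviality of the middle term and Whitney multiplicativity gives $c_t(\cS^\vee)\cdot q(t)=1$ in $\HH^\bullet(\cD(r))[t]$. Because $\HH^\bullet(\cD(r))$ is finite-dimensional and graded, the element $q(t)=1+q_1t+\cdots+q_rt^r$ admits a unique multiplicative inverse in $\HH^\bullet(\cD(r))[[t]]$ which turns out to be a polynomial. For $\cD=\DA_n$, Table \ref{tab:picardonerings} tells us that $\Coeff_+(q(t)s(t))\subset\cI$, i.e.\ $q(t)s(t)=1$ in the cohomology ring; uniqueness of the inverse then forces $s(t)=c_t(\cS^\vee)$.

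For $\cD\in\{\DB_n,\DC_n,\DD_n\}$ I would additionally dualize the middle row and use the isomorphism $V^\vee\cong V$ induced by $\omega$ to obtain $0\to\cQ^\vee\to\cO\otimes V\to\cS\to 0$; Whitney together with the identity $c_t(E^\vee)=c_{-t}(E)$ yields $c_t(\cS)=q(-t)^{-1}$. The bottom row $0\to\cK\to\cS\to\cQ\to 0$ then gives $c_t(\cK)=c_t(\cS)\cdot q(t)^{-1}=(q(t)q(-t))^{-1}$. Comparing with Table \ref{tab:picardonerings}, whose defining relations for these three types include $\Coeff_+(q(t)q(-t)k(t))\subset\cI$, one concludes $q(t)q(-t)k(t)=1$ in cohomology, hence $k(t)=c_t(\cK)$ by uniqueness of the inverse.

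The subtle point I expect to require care is the inversion step: one must check that the algebraically defined polynomials $s(t)$ and $k(t)$ agree with the Chern polynomials not only as formal power series but as genuine polynomials of the expected degree. For $s(t)$ this is immediate from the rank of $\cS^\vee$. For $k(t)$ the relevant check is that $c_t(\cK)$ is actually even in $t$ of degree exactly $2(n-r)$; this is where the orthogonal (resp.\ symplectic) structure induced on $\cK$ by the restriction of $\omega$ enters, forcing all odd Chern classes of $\cK$ in real cohomology to vanish and pinning down the top degree, in perfect agreement with the form $k(t)=\sum_{i=0}^{n-r}k_{2i}t^{2i}$ prescribed by Table \ref{tab:invariant}.
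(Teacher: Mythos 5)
Your argument is correct and is essentially the paper's proof: both derive $c_t(\cS^\vee)q(t)=1$ and $c_t(\cK)c_t(\cQ)c_t(\cQ^\vee)=1$ from the diagram, compare with the relations $q(t)s(t)=1$ and $k(t)q(t)q(-t)=1$ from Table \ref{tab:picardonerings}, and conclude by uniqueness of the inverse of a unit power series (which the paper spells out as an induction on coefficients). Your final ``subtle point'' is superfluous: the coefficient-by-coefficient identity $c_t(\cK)=k(t)$ obtained from that uniqueness already forces the odd Chern classes of $\cK$ to vanish and fixes the degree, so no separate appeal to the orthogonal or symplectic structure on $\cK$ is needed.
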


\begin{proof}
The first equality follows from Proposition \ref{prop:qsarechern}. Then, 
 in the case $\DA_n$ we also have $c_t(\cS^\vee)=s(t)$ by Table \ref{tab:picardonerings}. 

In the cases $\DB_n$, $\DC_n$ and $\DD_n$, 
by the diagram (\ref{eq:diagram1}) above we have the equalities of Chern polynomials: $ c_t(\cS) c_t(\cQ^\vee)=1$ and $c_t(\cK)c_t(\cQ)=c_t(\cS)$.
Multiplying the second equality by $c_t(\cQ^\vee)$ we get $c_t(\cK)c_t(\cQ)c_t(\cQ^\vee)=1$. 

On the other hand  (see Table \ref{tab:picardonerings})  in  $\HH^\bullet(X)[t]$ we have the equality of polynomials $k(t)c_t(\cQ)c_t(\cQ^\vee)=k(t)q(t)q(-t)=1$. Then, setting $p(t)=\sum_{i}p_it^i:=c_t(\cK)-k(t)\in \HH^\bullet(X)[t]$, we have
$$
p(t)c_t(\cQ)c_t(\cQ^\vee)=1\in \HH^\bullet(X)[t].
$$
By construction $p_0=0$. Hence, since the constant term of $c_t(\cQ)c_t(\cQ^\vee)$ is equal to $1$, we can easily show, by induction on $i$, that $p_i=0$, for all $i$, and conclude that $c_t(\cK)=k(t)$.
\end{proof}

%


\section{Morphisms to rational homogeneous manifolds of Picard number one}\label{sec:morRH}

In this section we will use the above description on the cohomology of rational homogeneous spaces to obtain conditions for the existence of non constant morphisms from projective manifolds to rational homogeneous spaces. Let us start by recalling the definition of good divisibility from \cite{Pan} and a refined version of it --for effective cycles-- that we will use later on.

\begin{remark}
By definition, an element $x_i\in \HH^{2i}(X)$ is called  {\em effective} (and we write $x_i\geq 0$) if it can be expressed as a non negative real linear combination of classes of subvarieties of codimension $i$ in $X$. 
\end{remark}

\begin{definition} 
We say that a complex projective manifold $M$ has {\it (effective) good divisibility up to degree} $s$, and we write ($\ed(M)=s$) $\gd(M)=s$, if $s$ is the maximum positive integer such that, given (effective clasess) $x_i \in \HH^{2i}(M)$ and  $x_j \in \HH^{2j}(M)$ with $i+j \le s$ and $x_i  x_j=0$, then we have $x_i = 0$ or $x_j = 0$. 
\end{definition}

It is clear from the definition that $\gd(M)\leq \ed(M)\leq \dim(M)$.

\begin{example}
If $\HH^{2i}(M)\cong\R$ for every $i$ then clearly $\ed(M)=\gd(M)=\dim(M)$ (in fact they are equivalent by the Hard Lefschetz theorem). This is the case, for instance, of the projective spaces $\DA_n(1)$, and the odd dimensional quadrics $\DB_n(1)$.  
If $\HH^{2i}(M)\cong\R$ for every $i\leq s<\dim(M)$, then we may conclude that $\ed(M)\geq\gd(M)\geq s+1$. For instance, in the case of the even
dimensional quadric $\Q^{2n}=\DD_{n+1}(1)$, we have:
$$
\HH^\bullet(\DD_{n+1}(1))=\dfrac{\R[q_1,\eta_{n}]}{(\eta_{n}^2-(-1)^nq_1^{2n},q_1\eta_{n})},
$$
so that $\HH^{2i}(\Q^{2n})\cong\R$ for $i<n$ and, in particular, $\ed(\Q^{2n})\geq \gd(\Q^{2n})\geq n$. On the other hand  
we have a relation $q_1\eta_n=0$ in $\HH^{2(n+1)}(\Q^{2n})$, which tells us that $\gd(\Q^{2n})=n$ (note that this implies that \cite[Lemma 5.4]{Pan} is incorrect). Since $q_1$ is an ample generator of $\HH^2(\Q^{2n})$,  the condition $q_1\eta_n=0$ implies that $\pm\eta_n$  cannot be effective, and we have $\ed(\Q^{2n})>n$. Moreover, since $\HH^{2i}(\Q^{2n})=\R q_1^i$ for $i<n$, it follows that we have only one nontrivial relation in degree $n+i$, namely $q_1^i\eta_n=0$, and so $\ed(\Q^{2n})\geq 2n-1$. On the other hand, in codimension $n$, the two classes $\alpha,\beta$ of $(n-1)$-dimensional linear spaces in $\Q^{2n}$ satisfy, either $\alpha \beta=0$ (if $n$ is even), or $\alpha^2=\beta^2=0$ (if $n$ is odd), and we may finally conclude that $\ed(\Q^{2n})=2n-1$ (moreover one may show, from the above equations, that $q_1^n=\alpha+\beta$, and $\eta_n=\alpha-\beta$). \qed
\end{example}

Another example in which $\gd(M)\neq \ed(M)$ is the Grassmannian of lines in $\P^m$, $\DA_m(2)=\G(1,m)$, $m\geq 3$. In this case, a straightforward computation provides a presentation:
$$
\HH^\bullet(\DA_m(2))=\dfrac{\R[q_1,q_2]}{(q_1s_{m-1}+q_2s_{m-2},q_2s_{m-1})},
$$
where $s_{m-1}, s_{m-2}$ can be written as polynomials in $q_1,q_2$ as follows:
\begin{equation}\label{eq:matrixgrass}
\left(\begin{array}{c}s_{m-1}\\s_{m-2}\end{array}\right)=\left(\begin{array}{cc}-q_1&-q_2\\1&0\end{array}\right)^{m-1}\left(\begin{array}{c}1\\0\end{array}\right).
\end{equation}
For instance, in the case $m=3$, the first generator of the ideal above provides a relation $q_1(q_1^2-2q_2)=0$ in the cohomology ring, which provides $\gd(\DA_3(2))=2$; this was already known from the fact that  $\DA_3(2)$ is a $4$-dimensional quadric. More generally, we have: 

\begin{lemma}
The Grassmannian $\DA_m(2)=\G(1,m)$, $m\geq 3$ has good divisibility equal to $m-1$.
\end{lemma}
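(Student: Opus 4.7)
The plan is to sandwich $\gd(\DA_m(2))$ between $m-1$ (lower bound) and $m-1$ (upper bound), both drawn from the presentation $\HH^\bullet(\DA_m(2))=\R[q_1,q_2,s_1,\dots,s_{m-1}]/\big(\Coeff_+(q(t)s(t))\big)$ displayed in Table \ref{tab:picardonerings}.

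For the lower bound, the first $m-1$ generators $s_k+q_1 s_{k-1}+q_2 s_{k-2}$ ($1\le k\le m-1$) of the ideal allow us to eliminate the variables $s_1,\dots,s_{m-1}$, expressing each $s_k$ as a weighted-homogeneous polynomial of degree $k$ in $q_1,q_2$ (with $\deg q_1=1$, $\deg q_2=2$). This rewrites the cohomology ring as $\HH^\bullet(\DA_m(2))=\R[q_1,q_2]/J$, where $J=\big(q_1 s_{m-1}+q_2 s_{m-2},\; q_2 s_{m-1}\big)$ is generated by polynomials of weighted degrees $m$ and $m+1$. Hence the natural surjection $\R[q_1,q_2]\twoheadrightarrow\HH^\bullet(\DA_m(2))$ is injective in all weighted degrees strictly below $m$; since $\R[q_1,q_2]$ is an integral domain, no zero product of two nonzero classes exists in cohomological degrees summing to at most $m-1$, yielding $\gd(\DA_m(2))\ge m-1$.

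For the upper bound I exhibit a zero product in degree $m$. Formally extending the recursion one step further, define $s_m:=-q_1 s_{m-1}-q_2 s_{m-2}\in\R[q_1,q_2]$; the cohomology relation $q_1 s_{m-1}+q_2 s_{m-2}=0$ then reads $s_m=0$ in $\HH^{2m}(\DA_m(2))$. Writing $\alpha,\beta$ for the roots of $T^2+q_1T+q_2$, the closed form $s_k=(\alpha^{k+1}-\beta^{k+1})/(\alpha-\beta)$ gives
\[
s_m \;=\; \prod_{k=1}^{m}(\alpha-\omega^k\beta),\qquad \omega:=e^{2\pi i/(m+1)}.
\]
Pairing the factors indexed by $k$ and $m+1-k$ converts them into real quadratic polynomials $q_1^2-4q_2\cos^2(\pi k/(m+1))\in\R[q_1,q_2]$; when $m$ is odd, the central index $k=(m+1)/2$ yields an additional linear factor $-q_1$. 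For every $m\ge 3$ this exhibits at least two nonconstant real factors of $s_m$, so one can group them as $s_m=F\cdot G$ with $F,G\in\R[q_1,q_2]$ nonconstant of weighted degrees $f,g$ satisfying $f+g=m$ and $1\le f,g\le m-1$. By the previous paragraph $F,G$ remain nonzero in $\HH^\bullet(\DA_m(2))$, while $F\cdot G=s_m=0$ there, giving $\gd(\DA_m(2))\le m-1$.

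The main technical step is the explicit factorization of $s_m$ in $\R[q_1,q_2]$; it becomes transparent upon recognizing $s_m$ as (a rescaling of) the Chebyshev polynomial of the second kind $U_m$ under the substitution $q_1=-2r\cos\theta$, $q_2=r^2$, but can equivalently be read off the cyclotomic pairing written above.
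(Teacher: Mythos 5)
Your proof is correct, and it follows the same overall strategy as the paper: both arguments work with the presentation $\HH^\bullet(\DA_m(2))=\R[q_1,q_2]/\big(q_1s_{m-1}+q_2s_{m-2},\,q_2s_{m-1}\big)$, deduce $\gd\ge m-1$ from the fact that the ideal has no elements of weighted degree $<m$ while $\R[q_1,q_2]$ is a domain, and deduce $\gd\le m-1$ by showing that the degree-$m$ relation is reducible over $\R$ into two nonconstant factors, each nonzero in cohomology. Where you genuinely differ is in how reducibility is established: the paper argues recursively from the matrix formula that the relation is a multiple of $q_1$ for $m$ odd and a homogeneous polynomial in $q_1^2,q_2$ for $m$ even, and declares it reducible, whereas you produce the explicit factorization $s_m=\prod_{k=1}^m(\alpha-\omega^k\beta)$ with $\alpha,\beta$ the roots of $T^2+q_1T+q_2$, pairing conjugate roots of unity to get the real factors $q_1^2-4q_2\cos^2(\pi k/(m+1))$ (and $-q_1$ for $m$ odd). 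This buys something real: for $m$ even the paper's stated reason is not by itself conclusive (a weighted-homogeneous polynomial in $q_1^2,q_2$ of degree $4$, such as $q_1^4+q_2^2$, can be irreducible over $\R$, so the case $m=4$ needs the actual shape of the polynomial), and your closed form settles all cases uniformly and explicitly. The only points a referee might ask you to spell out are the standard verification that the recursion $s_k=-q_1s_{k-1}-q_2s_{k-2}$ gives $s_k=(\alpha^{k+1}-\beta^{k+1})/(\alpha-\beta)$, i.e.\ the complete homogeneous symmetric polynomial in $\alpha,\beta$, and the remark that the paired factors are honest elements of $\R[q_1,q_2]$; both are immediate.
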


\begin{proof}
By means of (\ref{eq:matrixgrass}), the relation $q_1s_{m-1}+q_2s_{m-2}=0$ can be written in terms of $q_1,q_2$ as:
$$
\left(\begin{array}{cc}1&0\end{array}\right)\left(\begin{array}{cc}q_1&q_2\\-1&0\end{array}\right)^{m}\left(\begin{array}{c}1\\0\end{array}\right)=0.
$$
One can prove, recursively, that this is either a homogeneous polynomial in $q_2$ and $q_1^2$ for $m$ even, or a multiple of $q_1$ when $m$ is odd. In any case, it is reducible, and  we have exactly $\gd(\DA_m(2))=m-1$. 
\end{proof}

In order to calculate the effective good divisibility of $\DA_m(2)$, we will use standard Schubert calculus, for which we introduce the following notation.

\begin{notation} {\rm We denote by $\sigma_{a_1,a_2}$ the Schubert cycle in $\G(1,m)$
determined by the sequence $(a_1,a_2)$, where $m-1 \ge a_1 \ge a_2 \ge 0$; this cycle  has codimension $|a|:=a_1+a_2$.
Taking a partial flag $\P^{m-1-a_1}\subset \P^{m-a_2}$, $\sigma_{a_1,a_2}$ is the cohomology class of the set of lines meeting $\P^{m-1-a_1}$, and contained in  $\P^{m-a_2}$. }
\end{notation}

\begin{lemma}\label{lem:glines2} The Grassmannian $\DA_m(2)=\G(1,m)$, $m\geq 3$ has effective good divisibility equal to $m$.
\end{lemma}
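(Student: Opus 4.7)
Given that the previous lemma shows $\gd(\DA_m(2)) = m-1$, the inequality $\ed \geq \gd$ already yields $\ed(\DA_m(2)) \geq m-1$. The plan is to sharpen this by proving both $\ed(\DA_m(2)) \leq m$ and $\ed(\DA_m(2)) \geq m$ using Schubert calculus on the Grassmannian of lines $\G(1,m)$.

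For the upper bound I would exhibit an explicit vanishing product of effective classes with codimensions summing to $m+1$: take $\sigma_{m-1}$ (the class of lines through a fixed point, codimension $m-1$) and $\sigma_{1,1}$ (the class of lines contained in a fixed hyperplane, codimension $2$). Pieri's rule gives $\sigma_{m-1}\cdot\sigma_{1,1} = \sum \sigma_{c_1,c_2}$, where $c_1+c_2 = m+1$, $c_1 \geq 1 \geq c_2 \geq 1$, and $c_1 \leq m-1$. The first three constraints force $c_2=1$ and $c_1=m$, contradicting $c_1 \leq m-1$, so the sum is empty and the product vanishes.

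For the lower bound, suppose $x_i, x_j$ are nonzero effective classes with $i+j \leq m$; I would show $x_i x_j \neq 0$. By Kleiman's transversality theorem, every effective class on a Grassmannian is a non-negative real combination of Schubert classes (the coefficients are computed as intersection numbers with Poincar\'e-dual Schubert varieties translated to general position). Writing $x_i = \sum a_\lambda\sigma_\lambda$ and $x_j = \sum b_\mu\sigma_\mu$ with $a_\lambda, b_\mu \geq 0$ and not all vanishing, the positivity of Littlewood--Richardson coefficients makes $x_i x_j$ a non-negative combination of Schubert classes, whose vanishing forces $\sigma_\lambda\sigma_\mu = 0$ whenever $a_\lambda b_\mu > 0$. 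Since at least one such pair exists, it is enough to verify that $\sigma_\lambda\sigma_\mu \neq 0$ for every pair of nonzero valid partitions $\lambda, \mu$ satisfying $m-1 \geq \lambda_1 \geq \lambda_2 \geq 0$, $m-1 \geq \mu_1 \geq \mu_2 \geq 0$, and $|\lambda|+|\mu| \leq m$.

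To establish this positivity I would split into two cases. If $\lambda_1+\mu_1 \leq m-1$, then $\nu := (\lambda_1+\mu_1,\ \lambda_2+\mu_2)$ is itself a valid partition, and the trivial filling of the skew shape $\nu/\lambda$ (each box of row $i$ labelled $i$) is a Littlewood--Richardson tableau of content $\mu$, so $c^\nu_{\lambda\mu} \geq 1$ and $\sigma_\nu$ appears in the expansion. Otherwise $\lambda_1+\mu_1 \geq m$, which combined with $|\lambda|+|\mu| \leq m$ forces $\lambda_2=\mu_2=0$ and $\lambda_1+\mu_1=m$; both $\lambda, \mu$ are thus nontrivial single-row partitions with entries in $[1, m-1]$, and Pieri applied to $\sigma_{\lambda_1}\cdot\sigma_{\mu_1}$ produces the valid Schubert class $\sigma_{\max(\lambda_1,\mu_1),\,\min(\lambda_1,\mu_1)}$, which is nonzero. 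The main obstacle will be setting up cleanly the reduction to Schubert positivity through Kleiman transversality and Littlewood--Richardson positivity; the combinatorial checks themselves are routine Pieri applications in the tight codimension range.
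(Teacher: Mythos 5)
Your proof is correct and follows essentially the same route as the paper: the vanishing product you exhibit for the upper bound ($\sigma_{1,1}\cdot\sigma_{m-1}=0$) is exactly the paper's relation $q_2s_{m-1}=0$ read through Corollary \ref{cor:qsarechern}, and your lower bound uses the same reduction (effective classes are non-negative combinations of Schubert classes, then Littlewood--Richardson positivity reduces to a pair of Schubert cycles). The only cosmetic difference is that you prove non-vanishing of $\sigma_\lambda\sigma_\mu$ for $|\lambda|+|\mu|\le m$ by exhibiting an explicit LR tableau, whereas the paper characterizes when the two-row product is empty via the closed-form Pieri-type formula of Eisenbud--Harris and deduces $|\lambda|+|\mu|\ge m+1$.
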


\begin{proof}
Note that $q_2\geq 0$ and either $s_{m-1} 	\geq 0$ or $-s_{m-1}\geq 0$, hence our presentation of the cohomology ring of $\DA_m(2)$ tells us already that $\ed(\DA_m(2))\leq m$. For the converse, let $\Gamma_k \in \HH^{2k}(\G(1,m))$, $\Delta_{l} \in  \HH^{2l}(\G(1,m))$ be two effective nonzero classes, such that $\Gamma_k  \Delta_l =0$; we want to show that $k+l \ge m+1$.
Since the cones of effective classes in $\G(1,m)$ are polyhedral cones generated by Schubert classes (see \cite[Corollary of Theorem 1]{FMSS} for a more general statement), we may write:
$$
\Gamma_k=\sum_{i=0}^{\lfloor k/2\rfloor}a_i\sigma_{k-i,i},\quad \Delta_{l}=\sum_{j=0}^{\lfloor l/2\rfloor}b_j\sigma_{l-j,j},
$$
where the $a_i$'s and the $b_j$'s are non negative integers.

Every intersection $\sigma_{k-i,i} \sigma_{l-j,j}$ is a combination  of Schubert cycles, with nonnegative coefficients (due to the Littlewood--Richardson rule, see \cite[Lemma 14.5.3 and Section 14.7]{Fult}), hence it is then enough to prove the statement for $\Gamma_k = \sigma_{a_1,a_2}$ and $\Delta_l=\sigma_{b_1,b_2}$, where we assume, up to order exchange, that $a_1-a_2 \ge b_1-b_2$.
By \cite[Proposition 4.11]{3264} we have
$$ \sigma_{a_1,a_2}   \sigma_{b_1,b_2} = \sum_{\substack{|c|=k+l \\ a_1+b_1\ge c_1 \ge a_1+b_2}} \sigma_{c_1,c_2};$$
since $(k+l)/2 \le c_1 \le m-1$ the above sum has no summands if and only if  $a_1+b_2 \ge m$, which, recalling that $a_1 \le m-1$, provides $b_2\ge 1$. Therefore $b_1 \ge b_2>0$, and so $k+l = |a|+|b| \ge m+1$.
\end{proof}

\begin{definition}\label{def:cox}
Let $\pi:Y \to X$ be a $G/B$-bundle. The {\em Coxeter number of} $\pi$, denoted by $\Cox(\pi)$, is defined as the Coxeter number of the Dynkin diagram of $G$. For a rational homogeneous manifold of Picard number one, different from $\P^1$, we define:
$$\Cox(X):=\min\{\Cox(\pi)|\,\,\pi \mbox{ universal flag bundle on }X\},
$$
and set $\Cox(\P^1):=+\infty$.
\end{definition}

\begin{proposition}\label{prop:classical}
Let $M$ be a complex projective manifold and $\cD(r)$ a rational homogeneous manifold of Picard number one of classical type, satisfying that $\ed(M) \ge \Cox(\cD)$. In the case $\cD=\DD_n$, assume also that $2\gd(M) > \Cox(\cD)$. Then there are no nonconstant morphisms from $M$ to $\cD(r)$.
\end{proposition}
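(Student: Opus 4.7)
The plan is to argue by contradiction: supposing $f:M\to\cD(r)$ is non-constant, the pullback $f^*q_1\in\HH^2(M)$ is effective and non-zero, because $q_1$ is a positive multiple of the ample generator of $\Pic(\cD(r))$ (see Section \ref{ssec:geominter}) and $f^*$ is injective on the image of $f$. I will then aim to derive $f^*q_1=0$ from the relations of Table \ref{tab:picardonerings} and the (effective) good divisibility hypothesis, obtaining a contradiction.

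In each classical type, the starting point is a ``top'' relation in $\HH^\bullet(\cD(r))$ of the form $\alpha\cdot\beta=0$, with $\alpha,\beta$ effective classes (specifically, Chern classes of globally generated universal bundles, by Corollary \ref{cor:qsarechern}) whose degrees sum to $\Cox(\cD)$. For $\DA_n$ I would use $q_r\cdot s_{n+1-r}=0$, the top coefficient of $q(t)s(t)$: here $q_r=c_r(\cQ)$ and $(-1)^{n+1-r}s_{n+1-r}=c_{n+1-r}(\cS)$ are top Chern classes of the globally generated bundles $\cQ$ and $\cS$. For $\DB_n$ and $\DC_n$ I would use $q_r\cdot c_{2n-r}(\cS)=0$: the top coefficient of $q(t)q(-t)k(t)$ is $q_r^2k_{2(n-r)}$, which factors this way via $c_t(\cS)=k(t)q(t)$, and $c_{2n-r}(\cS)$ is effective as a Chern class of the globally generated bundle $\cS$ in a degree at most $\rk(\cS)$. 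For $\DD_n$ the class $c_{2n-r}(\cS)$ vanishes by the relation $q_r\eta_{n-r}=0$, so this approach degenerates; instead I would use the relation $q_r\cdot\eta_{n-r}=0$ itself, of total degree $n\leq 2n-2=\Cox$. Since $\eta_{n-r}$ may fail to be effective (cf.\ the even quadric example), I would invoke the extra hypothesis $2\gd(M)>2n-2$ and apply good divisibility rather than its effective refinement.

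Pulling back and applying the relevant divisibility yields in each case the dichotomy $f^*\alpha=0$ or $f^*\beta=0$. To close the proof I would run a descending induction through the lower-degree coefficients of the polynomial identity defining $\cI$: assuming $f^*\beta=0$ collapses each subsequent coefficient to a product of (signed) effective classes of smaller total degree, to which the divisibility again applies; iterating either eventually gives $f^*\alpha=0$ or forces $f^*s_1=0$ (respectively, $f^*\eta_{n-r}=0$ in the $\DD_n$ case), and the latter implies $f^*q_1=-f^*s_1=0$ via the linear relation $q_1+s_1=0$ (or its analogue). Once $f^*q_r=0$ has been established, the same cascade applied to the coefficients now involving $q_{r-1}$ yields $f^*q_{r-1}=0$, and so on descending until $f^*q_1=0$, contradicting the non-constancy of $f$.

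The hard part will be the descent itself: at every step one must exhibit an effective representative for the emerging product (sometimes by re-expressing it as a Chern class of one of $\cQ,\cS,\cK$ using the exact sequences of Section \ref{ssec:geominter}) and verify that the total ed-degree stays within the bound $\Cox(\cD)$; the bookkeeping grows with $r$ and $n$, but the logic is uniform across the classical types.
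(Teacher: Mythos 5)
Your ingredients are exactly those of the paper's proof: the presentations of Table \ref{tab:picardonerings}, the identification of $q(t)$ and $s(t)$ (resp.\ $q(-t)k(t)$) with the Chern polynomials of $\cQ$ and $\cS^\vee$, with $\cQ$, $\cS$ globally generated (Corollary \ref{cor:qsarechern}), the bound $\ed(M)\ge\Cox(\cD)$, and, in type $\DD_n$, the use of $\gd(M)\ge n$ on the relation $q_r\eta_{n-r}=0$. The difference is one of organization, and it sits precisely at the point you defer (``the hard part will be the descent''): the paper does not descend coefficient by coefficient from the top relation; it sets $\lambda(t)=\phi^*q(t)$ and $\mu(t)=\phi^*s(t)$ (resp.\ $\phi^*(q(-t)k(t))$), observes that $\lambda(t)\mu(t)=1$ in $\HH^\bullet(M)[t]$, and applies effective good divisibility \emph{once}, to the product $\lambda_{\imath}\mu_{\jmath}=0$ of the top nonvanishing coefficients, whose total degree is at most $\Cox(\cD)$ (after the $\DD_n$ dichotomy). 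This eliminates the bookkeeping you worry about --- no effectivity is needed beyond $\lambda_i=c_i(\phi^*\cQ)$ effective and $\mu_j=c_j(\phi^*\cS^\vee)$ effective or antieffective --- and it also repairs the one imprecise step in your cascade: once $f^*q_r=0$, the next coefficient yields only the dichotomy $f^*q_{r-1}=0$ or $f^*s_{n+1-r}=0$, not $f^*q_{r-1}=0$ outright, so your descent must really be run as a double induction on the pair of maximal nonvanishing indices of $f^*q(t)$ and $f^*s(t)$; that double induction, properly formulated, \emph{is} the paper's leading-coefficient argument. With that reorganization your argument closes exactly as the paper's does: $\lambda$ and $\mu$ are constant, hence $f^*q_1=f^*c_1(\det\cQ)=0$, and ampleness of $\det\cQ$ forces $f$ to be constant.
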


\begin{proof}
Since $\DD_{n}(n) \simeq \DD_n(n-1)$, we can assume $r \neq n-1$ in case $\DD_n(r)$. Let $\phi:M \to \cD(r)$ be a morphism, and set 
$$\lambda(t) = \phi^*q(t), \quad \mu(t)=\begin{cases}\phi^*s(t)& \textrm{if~} \cD=\DA_n\\\phi^*(q(-t)k(t)) & \textrm{if~} \cD=\DB_n, \DC_n, \DD_n \\ \end{cases}$$

First we claim that
 $\deg(\lambda(t))+ \deg(\mu(t)) \le h(\cD)$, which follows from the definition in cases $\DA_n,\DB_n$ and $\DC_n$. For case $\DD_n$ we have $\phi^*q_r\phi^*\eta_{n-r}=0$  (see Table (\ref{tab:picardonerings})) which, together with the hypothesis on $\gd(M)$, tells us that either $\phi^*q_r=0$ or $\phi^*\eta_{n-r}=0$. Since the leading term of $\lambda(t)$ (resp. $\mu(t)$) is $\phi^*(q_r)t^r$ (resp. $(-1)^{r-1}\phi^*(q_{r-1})\phi^*(\eta_{n-r})^2t^{2n-r-1}$) then, either $\deg(\lambda)\leq r-1$ or $\deg(\mu)\leq 2n-r-2$. In any case the sum of their degrees is at most $2n-2=\Cox(\DD_n)$.
 
Write $\lambda(t)=\sum_i\lambda_it^i$, and $\mu(t)=\sum_i\mu_it^i$, and let $  \imath$ and $  \jmath$ be the maximum indexes for which $\lambda_{  \imath} \not = 0$ and  $\mu_{  \jmath} \not = 0$. From Table (\ref{tab:picardonerings}), we have $\lambda(t)\mu(t)=1$, and, as we have already noticed that $  \imath +   \jmath \le \Cox(\cD)$. If $  \imath   +\jmath \not =0$, we have
$$\lambda_{  \imath}\mu_{  \jmath}\,=0;$$
since, from Corollary \ref{cor:qsarechern},
$\lambda_{  \imath}=c_{  \imath}(\phi^*\cQ)$ is an effective class and $\mu_{  \jmath}=c_{  \jmath}(\phi^*\cS^\vee)$ is either effective or antieffective, we reach a contradiction with the assumption $\ed(M) \ge h(\cD)$. We conclude that $\lambda$ and $\mu$ are constant, so that, in particular, the pullback by $\phi$ of the ample line bundle $\det(\cQ)$ is numerically trivial, and $\phi$ is constant.  
\end{proof}


\section{Splitting Conjecture $\bm{(\Cox)}$ for groups of classical type}\label{sec:main}

\begin{theorem}\label{thm:main}
Let $X$ be a rational homogenous manifold of Picard number one, quotient of a simple algebraic  group $\ol G$ and $\cM$ its family of $\ol G$-isotropic lines. Let $G$ be another simple algebraic group, and let $\pi: Y \to X$ be a $G/B$-bundle, uniform with respect to $\cM$.  Assume that both $\ol G$ and $G$ are of classical type and that $\Cox(X)>\Cox(\pi)$. Then $\pi: Y \to X$ is diagonalizable.
\end{theorem}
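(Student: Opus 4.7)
The strategy is to reduce the statement to the diagonalizability criterion of Proposition \ref{prop:diagcrit}: it suffices to check that the fiber $\cM_x$ of the evaluation $q:\cU\to X$ admits no non-constant morphism to $\cD(r)$ for every $r\in \Delta\setminus I_0$, where $\Delta$ denotes the Dynkin diagram of the structure group $G$. To do so I plan to invoke Proposition \ref{prop:classical} with $M=\cM_x$, which requires the inequality $\ed(\cM_x)\ge \Cox(G)$, together with $2\gd(\cM_x)>\Cox(G)$ when $G$ is of type $\DD_n$.

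First I would reformulate the hypothesis $\Cox(X)>\Cox(\pi)$ in terms of the structure of $\cM_x$. Writing $X=\ol\cD(\jmath)$ with $\ol\Delta\setminus\{\jmath\}=I$ (where $\ol\Delta$ is the Dynkin diagram of $\ol G$), Example \ref{ex:unifhomog} identifies $\cM_x$ with $\ol\cD_I(\ngb(\jmath))$. In every classical Dynkin diagram, removing a single node places its neighbours in distinct connected components, so $\cM_x$ decomposes as a product
\begin{equation*}
\cM_x\iso \prod_{s:\, I_s\cap \ngb(\jmath)\neq\emptyset}\ol\cD_{I_s}\bigl(I_s\cap\ngb(\jmath)\bigr)
\end{equation*}
of Picard-number-one rational homogeneous manifolds of classical type, indexed by the connected components $I_1,\dots, I_k$ of $\ol\cD_I$ that meet $\ngb(\jmath)$. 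Since the universal flag bundles on $X$ correspond precisely to the components $I_s$ (Example \ref{ex:univbund}), we have $\Cox(X)=\min_s\Cox(\ol\cD_{I_s})$, and the assumption becomes $\Cox(\ol\cD_{I_s})>\Cox(G)$ for every~$s$.

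Next I would establish the required divisibility bounds factor by factor. For each Picard-one factor $N=\ol\cD_{I_s}(J)$, the presentation of $H^\bullet(N)$ in Table \ref{tab:picardonerings}, together with Corollary \ref{cor:qsarechern} identifying the generators $q_i,s_i,k_{2i},\eta_{n-r}$ with (anti-)effective Chern classes of universal bundles, reduces the computation of $\ed(N)$ and $\gd(N)$ to Schubert calculus. Arguing as in the sample computations for $\Q^{2n}$ and $\G(1,m)$ preceding Proposition \ref{prop:classical}, and invoking the Littlewood--Richardson positivity used in Lemma \ref{lem:glines2}, one verifies the uniform estimates $\ed(N)\ge \Cox(\ol\cD_{I_s})-1$, and $2\gd(N)\ge \Cox(\ol\cD_{I_s})$ when $\ol\cD_{I_s}$ is of type $\DD$. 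Combined with the strict inequality $\Cox(\ol\cD_{I_s})>\Cox(G)$, the first bound gives $\ed(N)\ge \Cox(G)$ in every case, and the second yields $2\gd(N)>\Cox(G)$ whenever $G$ is of type $\DD_n$ (using, in the non-$\DD$ factors, the equality $\gd(N)=\ed(N)\ge \Cox(G)$). A K\"unneth argument, leaning once more on the positivity of Schubert expansions on each factor, transfers these bounds to the full product $\cM_x$. Proposition \ref{prop:classical} now rules out non-constant morphisms $\cM_x\to\cD(r)$ for each $r\in\Delta\setminus I_0$, and Proposition \ref{prop:diagcrit} delivers the diagonalizability of $\pi$.

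The main obstacle is the second step: a systematic, case-by-case book-keeping of the cohomology rings (Tables \ref{tab:picardonerings}, \ref{tab:invariant}), of the identification of their generators with (anti-)effective classes (Corollary \ref{cor:qsarechern}), and of the resulting divisibility estimates across all classical pairs $(\ol G, G)$. The most delicate case is when $\ol G$ or $G$ is of type $\DD_n$, where $\ed$ and $\gd$ genuinely differ (owing to the relations of the form $q_r\eta_{n-r}=0$) and the refined condition $2\gd(\cM_x)>\Cox(G)$ must be checked with care; the $\Q^{2n}$ and $\G(1,m)$ computations worked out before Proposition \ref{prop:classical} serve as the template, but extending them to every classical pair requires a genuinely positive analysis of the effective cones via Schubert calculus, rather than purely ring-theoretic manipulation, which is also what underlies the product-to-factor transfer of divisibility bounds.
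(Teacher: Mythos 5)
Your skeleton coincides with the paper's: reduce via Proposition \ref{prop:diagcrit} to the non-existence of nonconstant maps $\cM_x\to\cD(r)$, and obtain that from Proposition \ref{prop:classical} together with the divisibility estimates for the Picard-number-one factors (these are exactly the entries of the paper's Table \ref{tab:gded}, justified by the quadric and $\G(1,m)$ computations and Lemma \ref{lem:glines2}). Where you genuinely diverge is the case of a non-extremal node $\jmath$: the paper never applies Proposition \ref{prop:classical} to the reducible variety $\cM_x$. Instead it restricts the uniform bundle to the Picard-number-one subvarieties $F_t\subset X$ associated with the connected components of $\ol\cD\setminus\{\jmath\}$ (which correspond to extremal nodes of the subdiagrams $J_t$, satisfy $\Cox(F_t)\ge\Cox(X)>\Cox(\pi)$, and inherit uniformity with respect to $\cM_{F_t}$), applies the extremal-node case to each $F_t$, and deduces that $s_I$ is constant on the fibers of $\cU\to\cU_t$ for every $t$, hence on the fibers of $q$. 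This sidesteps entirely the question of how $\ed$ and $\gd$ behave on products, which is precisely where your argument has a gap.

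Concretely: your ``K\"unneth argument, leaning on the positivity of Schubert expansions'' does transfer the $\ed$-bound to $\cM_x=\prod_s N_s$ (effective classes on the product, which is again rational homogeneous, are nonnegative combinations of external products of Schubert classes, and Schubert products expand nonnegatively), but it cannot transfer the $\gd$-bound, because good divisibility concerns \emph{arbitrary} cohomology classes, for which Schubert positivity says nothing. The bound $2\gd(\cM_x)>\Cox(G)$ is exactly what Proposition \ref{prop:classical} requires when $G$ is of type $\DD_n$, and this situation really occurs with $\cM_x$ reducible (e.g.\ $X=\DA_m(\jmath)$ with $\jmath$ interior, $\cM_x=\P^{\jmath-1}\times\P^{m-\jmath}$, or $X=\DD_n(\jmath)$ with $\jmath$ on the tail, $\cM_x=\P^{\jmath-1}\times Q^{2(n-\jmath)-2}$). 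So as written the step is unjustified; it can be repaired, but by a different argument (for instance, viewing classes on $\P^a\times N$ as polynomials in the hyperplane class with coefficients in $\HH^\bullet(N)$ and comparing leading coefficients, using that in the relevant degrees all but at most one factor has truncated-polynomial cohomology), not by positivity. A minor additional slip: your parenthetical ``$\gd(N)=\ed(N)$ for the non-$\DD$ factors'' fails for the factor $\DA_{n-1}(n-2)=\G(1,n-1)$ arising from $X=\DD_n(n)$, where $\gd=n-2<n-1=\ed$; the inequality $2\gd(\cM_x)>\Cox(G)$ you need still holds there, but not for the reason you give.
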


\begin{proof} 
Let us write $X$ as $X=\ol{\cD}({\jmath})$, where $\ol{\cD}$ is the Dynkin diagram of $\ol G$ and, following Section \ref{ssec:redunif}, denote by $I_0$ the set of nodes of the Dynkin diagram $\cD$ of $G$ for which the tag of the uniform bundle $\pi$ is equal to zero. 
We will show that, for every $r\in \Delta$, there are no nonconstant maps from $\cM_x$ to $\cD(r)$, and conclude by Proposition \ref{prop:diagcrit}; for this purpose, we will apply Proposition \ref{prop:classical}.

Let us assume first that ${\jmath}$ is an extremal node, that is, that the subdiagram $\ol{\cD} \setminus\{\ol\jmath\}$ is connected. The possibilities are listed in Table \ref{tab:gded} below.

\renewcommand*{\arraystretch}{1.2}
\begin{table}[h!]\centering
\begin{tabular}{|c|c|c H|c|c|}\hline
$X$   &$\Cox(X)$&  $ \cM_x$&$\dim \cM_x $  & $\gd(\cM_x)$ & $\ed(\cM_x)$ \\ 
\hline
$\DA_n(1)$ &$n$&$\DA_{n-1}(1)$ &$n-1$ & $n-1$ & $n-1$ \\
\hline
$\DB_n(1)$ &$2n-2$&$\DB_{n-1}(1)$ &$2n-3$ & $2n-3$ & $2n-3$ \\
\hline
$\DB_n(n)$ &$n$&$\DA_{n-1}(n-1)$ &$n-1$ & $n-1$ & $n-1$\\
\hline
$\DC_n(1)$ &$2n-2$&$\DC_{n-1}(1)$ &$2n-3$ & $2n-3$ & $2n-3$ \\
\hline
$\DC_n(n)$ &$n$&$ \DA_{n-1}(n-1)$&$n-1$ & $n-1$ & $n-1$\\
\hline
$\DD_n(n)$ &$n$&$ \DA_{n-1}(n-2)$& $2n-4$ & $n-2$ & $n-1$\\
\hline
$\DD_n(1)$ &$2n-4$& $\DD_{n-1}(1)$ & $2n-4$ & $n-2$ & $2n-5$\\
\hline
\end{tabular} 
\caption{Picard number one RHS corresponding to an extremal node}
\label{tab:gded}
\end{table}

In all the cases we have 
$\ed(\cM_x) \geq \Cox(X)-1 \ge \Cox(\pi)=\Cox(\cD),$ and
$2 \gd (\cM_x)  \ge \Cox(X) >\Cox(\pi)=\Cox(\cD)$, so we can conclude by Proposition \ref{prop:classical}. 

Assume now that the node ${\jmath}$ is not extremal and set $\{{\jmath}_s\}_s=\ngb(\jmath)$. In this case, $\cM_x$ is isomorphic to a product of rational homogeneous spaces, one for each neighboring node of $\jmath$, that we will describe as follows. 

For every neighboring node ${\jmath}_t$, we consider the following rational homogeneous varieties:
$$
\cU_t:=\ol{\cD}(\{{\jmath}\}\cup\{{\jmath}_s\}_{s\neq t}),\qquad \cU'_t:=\ol{\cD}(\{{\jmath}_s\}_{s\neq t}),
$$
and the corresponding contractions:
$$\xymatrix@R=3mm{&&X\\\cU\ar@/^3mm/[urr]^{q}\ar[r]\ar[rd]&\cU_t\ar[ru]\ar[rd]^{p_t}&\\&\cM\ar[r]&\cU'_t}
$$
Denoting by $J_t$ the union of $\{\jmath\}$ and the connected component of $\dis({\jmath})$ containing ${\jmath}_t$, the fibers $F_t$ of the map $p_t:\cU_t \to \cU'_t$ are rational homogeneous manifolds of Picard number one, isomorphic to $\ol{\cD}_{J_t}(\jmath)$. Every $F_t$ can be thought of as a subvariety of $X$, covered by a subfamily $q_{t}:\cU_{F_t}\to \cM_{F_t}$ 
of $\ol{G}$-isotropic lines, 
that we may identify with
$$
\ol{\cD}_{J_t}(\{\jmath,\jmath_t\})\lra\ol{\cD}_{J_t}(\jmath_t)
$$
Now, for every $x\in F_t$, we will have $\cM_{F_t,x}\cong \ol{\cD}_{J_t\setminus\{\jmath\}}(\jmath_t)$, which is a fiber of the natural map $\cU\to \cU_t$. Finally, we have:
$$
\cM_x\cong \prod_{t\in\ngb(\jmath)}\cM_{F_t,x}.
$$

Note that, by construction, every  $F_t$ is defined by an extremal node, $\Cox(F_t)\geq \Cox(X)>\Cox(\pi)$, and the restriction of the bundle $Y$ to $F_t$ is uniform with respect to the family $\cM_{F_t}$. Hence we may apply our previous argument to $F_t$, and claim that the section $s_I$ is constant on the $\cM_{F_t,x}$, $x\in F_t$, hence on the fibers of $\cU \to \cU_t$. Since this holds for every $t$, we conclude that $s_I$ is constant on the fibers of $q$. 

To help the reader following the above proof we provide the corresponding marked Dynkin diagrams for  $\ol{\cD}=\DB_5$, $\jmath=2$, $\jmath_t=3$.

$$\xymatrix@R=5mm@C=1.2cm{&&&&&\ifx\du\undefined
  \newlength{\du}
\fi
\setlength{\du}{3.3\unitlength}
\begin{tikzpicture}[transform canvas={scale=0.8}]
\pgftransformxscale{1.000000}
\pgftransformyscale{1.000000}

\definecolor{dialinecolor}{rgb}{0.000000, 0.000000, 0.000000} 
\pgfsetstrokecolor{dialinecolor}
\definecolor{dialinecolor}{rgb}{0.000000, 0.000000, 0.000000} 
\pgfsetfillcolor{dialinecolor}


\pgfsetlinewidth{0.300000\du}
\pgfsetdash{}{0pt}
\pgfsetdash{}{0pt}

\pgfpathellipse{\pgfpoint{-26\du}{0\du}}{\pgfpoint{1\du}{0\du}}{\pgfpoint{0\du}{1\du}}
\pgfusepath{stroke}
\node at (-26\du,0\du){};

\pgfpathellipse{\pgfpoint{-16\du}{0\du}}{\pgfpoint{1\du}{0\du}}{\pgfpoint{0\du}{1\du}}
\pgfusepath{stroke}
\node at (-16\du,0\du){};
\pgfpathellipse{\pgfpoint{-16\du}{0\du}}{\pgfpoint{1\du}{0\du}}{\pgfpoint{0\du}{1\du}}
\pgfusepath{fill}
\node at (-16\du,0\du){};

\pgfpathellipse{\pgfpoint{-6\du}{0\du}}{\pgfpoint{1\du}{0\du}}{\pgfpoint{0\du}{1\du}}
\pgfusepath{stroke}
\node at (-6\du,0\du){};

\pgfpathellipse{\pgfpoint{4\du}{0\du}}{\pgfpoint{1\du}{0\du}}{\pgfpoint{0\du}{1\du}}
\pgfusepath{stroke}
\node at (4\du,0\du){};

\pgfpathellipse{\pgfpoint{14\du}{0\du}}{\pgfpoint{1\du}{0\du}}{\pgfpoint{0\du}{1\du}}
\pgfusepath{stroke}
\node at (14\du,0\du){};

\pgfsetlinewidth{0.300000\du}
\pgfsetdash{}{0pt}
\pgfsetdash{}{0pt}
\pgfsetbuttcap

{\draw (-25\du,0\du)--(-17\du,0\du);}
{\draw (-15\du,0\du)--(-7\du,0\du);}
{\draw (-5\du,0\du)--(3\du,0\du);}
{\draw (4.65\du,0.7\du)--(13.35\du,0.7\du);}
{\draw (4.65\du,-0.7\du)--(13.35\du,-0.7\du);}


{\pgfsetcornersarced{\pgfpoint{0.300000\du}{0.300000\du}}
\pgfsetstrokecolor{dialinecolor}
\draw (7\du,-1.2\du)--(10.8\du,0\du)--(7\du,1.2\du);}

\node[anchor=west] at (18\du,0\du){\Large $X$};


\node[anchor=south] at (-16\du,1.1\du){$\large \jmath$};

\node[anchor=south] at (-6\du,1.1\du){$\large \jmath_t$};



\end{tikzpicture} \\
&&{\ifx\du\undefined
  \newlength{\du}
\fi
\setlength{\du}{3.3\unitlength}
\begin{tikzpicture}[transform canvas={scale=0.75}]
\pgftransformxscale{1.000000}
\pgftransformyscale{1.000000}

\definecolor{dialinecolor}{rgb}{0.000000, 0.000000, 0.000000} 
\pgfsetstrokecolor{dialinecolor}
\definecolor{dialinecolor}{rgb}{0.000000, 0.000000, 0.000000} 
\pgfsetfillcolor{dialinecolor}


\pgfsetlinewidth{0.300000\du}
\pgfsetdash{}{0pt}
\pgfsetdash{}{0pt}

\pgfpathellipse{\pgfpoint{-6\du}{0\du}}{\pgfpoint{1\du}{0\du}}{\pgfpoint{0\du}{1\du}}
\pgfusepath{stroke}
\node at (-6\du,0\du){};
\pgfpathellipse{\pgfpoint{-6\du}{0\du}}{\pgfpoint{1\du}{0\du}}{\pgfpoint{0\du}{1\du}}
\pgfusepath{fill}
\node at (-6\du,0\du){};

\pgfpathellipse{\pgfpoint{4\du}{0\du}}{\pgfpoint{1\du}{0\du}}{\pgfpoint{0\du}{1\du}}
\pgfusepath{stroke}
\node at (4\du,0\du){};

\pgfpathellipse{\pgfpoint{14\du}{0\du}}{\pgfpoint{1\du}{0\du}}{\pgfpoint{0\du}{1\du}}
\pgfusepath{stroke}
\node at (14\du,0\du){};

\pgfsetlinewidth{0.300000\du}
\pgfsetdash{}{0pt}
\pgfsetdash{}{0pt}
\pgfsetbuttcap

{\draw (-5\du,0\du)--(3\du,0\du);}
{\draw (4.65\du,0.7\du)--(13.35\du,0.7\du);}
{\draw (4.65\du,-0.7\du)--(13.35\du,-0.7\du);}


{\pgfsetcornersarced{\pgfpoint{0.300000\du}{0.300000\du}}
\pgfsetstrokecolor{dialinecolor}
\draw (7\du,-1.2\du)--(10.8\du,0\du)--(7\du,1.2\du);}

\node[anchor=west] at (18\du,0\du){\Large$\cM_{F_t,x}$
};

\node[anchor=south] at (-6\du,1.1\du){$\large \jmath_t$};



\end{tikzpicture} }&&&\\
& \ifx\du\undefined
  \newlength{\du}
\fi
\setlength{\du}{3.3\unitlength}
\begin{tikzpicture}[transform canvas={scale=0.8}]
\pgftransformxscale{1.000000}
\pgftransformyscale{1.000000}

\definecolor{dialinecolor}{rgb}{0.000000, 0.000000, 0.000000} 
\pgfsetstrokecolor{dialinecolor}
\definecolor{dialinecolor}{rgb}{0.000000, 0.000000, 0.000000} 
\pgfsetfillcolor{dialinecolor}


\pgfsetlinewidth{0.300000\du}
\pgfsetdash{}{0pt}
\pgfsetdash{}{0pt}

\pgfpathellipse{\pgfpoint{-26\du}{0\du}}{\pgfpoint{1\du}{0\du}}{\pgfpoint{0\du}{1\du}}
\pgfusepath{stroke}
\node at (-26\du,0\du){};
\pgfpathellipse{\pgfpoint{-26\du}{0\du}}{\pgfpoint{1\du}{0\du}}{\pgfpoint{0\du}{1\du}}
\pgfusepath{fill}
\node at (-26\du,0\du){};

\pgfpathellipse{\pgfpoint{-16\du}{0\du}}{\pgfpoint{1\du}{0\du}}{\pgfpoint{0\du}{1\du}}
\pgfusepath{stroke}
\node at (-16\du,0\du){};
\pgfpathellipse{\pgfpoint{-16\du}{0\du}}{\pgfpoint{1\du}{0\du}}{\pgfpoint{0\du}{1\du}}
\pgfusepath{fill}
\node at (-16\du,0\du){};

\pgfpathellipse{\pgfpoint{-6\du}{0\du}}{\pgfpoint{1\du}{0\du}}{\pgfpoint{0\du}{1\du}}
\pgfusepath{stroke}
\node at (-6\du,0\du){};
\pgfpathellipse{\pgfpoint{-6\du}{0\du}}{\pgfpoint{1\du}{0\du}}{\pgfpoint{0\du}{1\du}}
\pgfusepath{fill}
\node at (-6\du,0\du){};

\pgfpathellipse{\pgfpoint{4\du}{0\du}}{\pgfpoint{1\du}{0\du}}{\pgfpoint{0\du}{1\du}}
\pgfusepath{stroke}
\node at (4\du,0\du){};

\pgfpathellipse{\pgfpoint{14\du}{0\du}}{\pgfpoint{1\du}{0\du}}{\pgfpoint{0\du}{1\du}}
\pgfusepath{stroke}
\node at (14\du,0\du){};

\pgfsetlinewidth{0.300000\du}
\pgfsetdash{}{0pt}
\pgfsetdash{}{0pt}
\pgfsetbuttcap

{\draw (-25\du,0\du)--(-17\du,0\du);}
{\draw (-15\du,0\du)--(-7\du,0\du);}
{\draw (-5\du,0\du)--(3\du,0\du);}
{\draw (4.65\du,0.7\du)--(13.35\du,0.7\du);}
{\draw (4.65\du,-0.7\du)--(13.35\du,-0.7\du);}


{\pgfsetcornersarced{\pgfpoint{0.300000\du}{0.300000\du}}
\pgfsetstrokecolor{dialinecolor}
\draw (7\du,-1.2\du)--(10.8\du,0\du)--(7\du,1.2\du);}

\node[anchor=west] at (-34\du,0\du){\Large $\cU$};


\node[anchor=south] at (-16\du,1.1\du){$\large \jmath$};

\node[anchor=south] at (-6\du,1.1\du){$\large \jmath_t$};



\end{tikzpicture} \ar@/^7mm/!<1ex,2ex>;[uurrr]!<-8ex,0ex>^{q}
\ar!<0ex,-2ex>;[dddd]!<0ex,-2ex>&\ar!<1ex,-1ex>;[rr]!<-9ex,-1ex>&&&\ifx\du\undefined
  \newlength{\du}
\fi
\setlength{\du}{3.3\unitlength}
\begin{tikzpicture}[transform canvas={scale=0.8}]
\pgftransformxscale{1.000000}
\pgftransformyscale{1.000000}

\definecolor{dialinecolor}{rgb}{0.000000, 0.000000, 0.000000} 
\pgfsetstrokecolor{dialinecolor}
\definecolor{dialinecolor}{rgb}{0.000000, 0.000000, 0.000000} 
\pgfsetfillcolor{dialinecolor}


\pgfsetlinewidth{0.300000\du}
\pgfsetdash{}{0pt}
\pgfsetdash{}{0pt}

\pgfpathellipse{\pgfpoint{-26\du}{0\du}}{\pgfpoint{1\du}{0\du}}{\pgfpoint{0\du}{1\du}}
\pgfusepath{stroke}
\node at (-26\du,0\du){};
\pgfpathellipse{\pgfpoint{-26\du}{0\du}}{\pgfpoint{1\du}{0\du}}{\pgfpoint{0\du}{1\du}}
\pgfusepath{fill}
\node at (-26\du,0\du){};

\pgfpathellipse{\pgfpoint{-16\du}{0\du}}{\pgfpoint{1\du}{0\du}}{\pgfpoint{0\du}{1\du}}
\pgfusepath{stroke}
\node at (-16\du,0\du){};
\pgfpathellipse{\pgfpoint{-16\du}{0\du}}{\pgfpoint{1\du}{0\du}}{\pgfpoint{0\du}{1\du}}
\pgfusepath{fill}
\node at (-16\du,0\du){};

\pgfpathellipse{\pgfpoint{-6\du}{0\du}}{\pgfpoint{1\du}{0\du}}{\pgfpoint{0\du}{1\du}}
\pgfusepath{stroke}
\node at (-6\du,0\du){};

\pgfpathellipse{\pgfpoint{4\du}{0\du}}{\pgfpoint{1\du}{0\du}}{\pgfpoint{0\du}{1\du}}
\pgfusepath{stroke}
\node at (4\du,0\du){};

\pgfpathellipse{\pgfpoint{14\du}{0\du}}{\pgfpoint{1\du}{0\du}}{\pgfpoint{0\du}{1\du}}
\pgfusepath{stroke}
\node at (14\du,0\du){};

\pgfsetlinewidth{0.300000\du}
\pgfsetdash{}{0pt}
\pgfsetdash{}{0pt}
\pgfsetbuttcap

{\draw (-25\du,0\du)--(-17\du,0\du);}
{\draw (-15\du,0\du)--(-7\du,0\du);}
{\draw (-5\du,0\du)--(3\du,0\du);}
{\draw (4.65\du,0.7\du)--(13.35\du,0.7\du);}
{\draw (4.65\du,-0.7\du)--(13.35\du,-0.7\du);}


{\pgfsetcornersarced{\pgfpoint{0.300000\du}{0.300000\du}}
\pgfsetstrokecolor{dialinecolor}
\draw (7\du,-1.2\du)--(10.8\du,0\du)--(7\du,1.2\du);}

\node[anchor=west] at (18\du,0\du){\Large$\cU_t$};


\node[anchor=south] at (-16\du,1.1\du){$\large \jmath$};

\node[anchor=south] at (-6\du,1.1\du){$\large \jmath_t$};



\end{tikzpicture} \ar!<2.5ex,1ex>;[uu]!<2.5ex,-2ex>\ar!<2.5ex,-2ex>;[dddd]!<2.5ex,-1ex>^(.40){p_t}\\
&&&&&\\
\ifx\du\undefined
  \newlength{\du}
\fi
\setlength{\du}{3.3\unitlength}
\begin{tikzpicture}[transform canvas={scale=0.75}]
\pgftransformxscale{1.000000}
\pgftransformyscale{1.000000}

\definecolor{dialinecolor}{rgb}{0.000000, 0.000000, 0.000000} 
\pgfsetstrokecolor{dialinecolor}
\definecolor{dialinecolor}{rgb}{0.000000, 0.000000, 0.000000} 
\pgfsetfillcolor{dialinecolor}


\pgfsetlinewidth{0.300000\du}
\pgfsetdash{}{0pt}
\pgfsetdash{}{0pt}

\pgfpathellipse{\pgfpoint{-16\du}{0\du}}{\pgfpoint{1\du}{0\du}}{\pgfpoint{0\du}{1\du}}
\pgfusepath{stroke}
\node at (-16\du,0\du){};
\pgfpathellipse{\pgfpoint{-16\du}{0\du}}{\pgfpoint{1\du}{0\du}}{\pgfpoint{0\du}{1\du}}
\pgfusepath{fill}
\node at (-16\du,0\du){};

\pgfpathellipse{\pgfpoint{-6\du}{0\du}}{\pgfpoint{1\du}{0\du}}{\pgfpoint{0\du}{1\du}}
\pgfusepath{stroke}
\node at (-6\du,0\du){};
\pgfpathellipse{\pgfpoint{-6\du}{0\du}}{\pgfpoint{1\du}{0\du}}{\pgfpoint{0\du}{1\du}}
\pgfusepath{fill}
\node at (-6\du,0\du){};

\pgfpathellipse{\pgfpoint{4\du}{0\du}}{\pgfpoint{1\du}{0\du}}{\pgfpoint{0\du}{1\du}}
\pgfusepath{stroke}
\node at (4\du,0\du){};

\pgfpathellipse{\pgfpoint{14\du}{0\du}}{\pgfpoint{1\du}{0\du}}{\pgfpoint{0\du}{1\du}}
\pgfusepath{stroke}
\node at (14\du,0\du){};

\pgfsetlinewidth{0.300000\du}
\pgfsetdash{}{0pt}
\pgfsetdash{}{0pt}
\pgfsetbuttcap

{\draw (-15\du,0\du)--(-7\du,0\du);}
{\draw (-5\du,0\du)--(3\du,0\du);}
{\draw (4.65\du,0.7\du)--(13.35\du,0.7\du);}
{\draw (4.65\du,-0.7\du)--(13.35\du,-0.7\du);}


{\pgfsetcornersarced{\pgfpoint{0.300000\du}{0.300000\du}}
\pgfsetstrokecolor{dialinecolor}
\draw (7\du,-1.2\du)--(10.8\du,0\du)--(7\du,1.2\du);}

\node[anchor=west] at (-26\du,0\du){\Large $\cU_{F_t}$
};

\node[anchor=south] at (-16\du,1.1\du){$\large \jmath$};

\node[anchor=south] at (-6\du,1.1\du){$\large  \jmath_t$};



\end{tikzpicture} &&&&\ifx\du\undefined
  \newlength{\du}
\fi
\setlength{\du}{3.3\unitlength}
\begin{tikzpicture}[transform canvas={scale=0.75}]
\pgftransformxscale{1.000000}
\pgftransformyscale{1.000000}

\definecolor{dialinecolor}{rgb}{0.000000, 0.000000, 0.000000} 
\pgfsetstrokecolor{dialinecolor}
\definecolor{dialinecolor}{rgb}{0.000000, 0.000000, 0.000000} 
\pgfsetfillcolor{dialinecolor}


\pgfsetlinewidth{0.300000\du}
\pgfsetdash{}{0pt}
\pgfsetdash{}{0pt}

\pgfpathellipse{\pgfpoint{-16\du}{0\du}}{\pgfpoint{1\du}{0\du}}{\pgfpoint{0\du}{1\du}}
\pgfusepath{stroke}
\node at (-16\du,0\du){};
\pgfpathellipse{\pgfpoint{-16\du}{0\du}}{\pgfpoint{1\du}{0\du}}{\pgfpoint{0\du}{1\du}}
\pgfusepath{fill}
\node at (-16\du,0\du){};

\pgfpathellipse{\pgfpoint{-6\du}{0\du}}{\pgfpoint{1\du}{0\du}}{\pgfpoint{0\du}{1\du}}
\pgfusepath{stroke}
\node at (-6\du,0\du){};

\pgfpathellipse{\pgfpoint{4\du}{0\du}}{\pgfpoint{1\du}{0\du}}{\pgfpoint{0\du}{1\du}}
\pgfusepath{stroke}
\node at (4\du,0\du){};

\pgfpathellipse{\pgfpoint{14\du}{0\du}}{\pgfpoint{1\du}{0\du}}{\pgfpoint{0\du}{1\du}}
\pgfusepath{stroke}
\node at (14\du,0\du){};

\pgfsetlinewidth{0.300000\du}
\pgfsetdash{}{0pt}
\pgfsetdash{}{0pt}
\pgfsetbuttcap

{\draw (-15\du,0\du)--(-7\du,0\du);}
{\draw (-5\du,0\du)--(3\du,0\du);}
{\draw (4.65\du,0.7\du)--(13.35\du,0.7\du);}
{\draw (4.65\du,-0.7\du)--(13.35\du,-0.7\du);}


{\pgfsetcornersarced{\pgfpoint{0.300000\du}{0.300000\du}}
\pgfsetstrokecolor{dialinecolor}
\draw (7\du,-1.2\du)--(10.8\du,0\du)--(7\du,1.2\du);}

\node[anchor=west] at (-26\du,0\du){\Large $F_t$
};

\node[anchor=south] at (-16\du,1.1\du){$\large \jmath$};

\node[anchor=south] at (-6\du,1.1\du){$\large \jmath_t$};



\end{tikzpicture} &\\
&&&&&\\
&\ifx\du\undefined
  \newlength{\du}
\fi
\setlength{\du}{3.3\unitlength}
\begin{tikzpicture}[transform canvas={scale=0.8}]
\pgftransformxscale{1.000000}
\pgftransformyscale{1.000000}

\definecolor{dialinecolor}{rgb}{0.000000, 0.000000, 0.000000} 
\pgfsetstrokecolor{dialinecolor}
\definecolor{dialinecolor}{rgb}{0.000000, 0.000000, 0.000000} 
\pgfsetfillcolor{dialinecolor}


\pgfsetlinewidth{0.300000\du}
\pgfsetdash{}{0pt}
\pgfsetdash{}{0pt}

\pgfpathellipse{\pgfpoint{-26\du}{0\du}}{\pgfpoint{1\du}{0\du}}{\pgfpoint{0\du}{1\du}}
\pgfusepath{stroke}
\node at (-26\du,0\du){};
\pgfpathellipse{\pgfpoint{-26\du}{0\du}}{\pgfpoint{1\du}{0\du}}{\pgfpoint{0\du}{1\du}}
\pgfusepath{fill}
\node at (-26\du,0\du){};

\pgfpathellipse{\pgfpoint{-16\du}{0\du}}{\pgfpoint{1\du}{0\du}}{\pgfpoint{0\du}{1\du}}
\pgfusepath{stroke}
\node at (-16\du,0\du){};

\pgfpathellipse{\pgfpoint{-6\du}{0\du}}{\pgfpoint{1\du}{0\du}}{\pgfpoint{0\du}{1\du}}
\pgfusepath{stroke}
\node at (-6\du,0\du){};
\pgfpathellipse{\pgfpoint{-6\du}{0\du}}{\pgfpoint{1\du}{0\du}}{\pgfpoint{0\du}{1\du}}
\pgfusepath{fill}
\node at (-6\du,0\du){};

\pgfpathellipse{\pgfpoint{4\du}{0\du}}{\pgfpoint{1\du}{0\du}}{\pgfpoint{0\du}{1\du}}
\pgfusepath{stroke}
\node at (4\du,0\du){};

\pgfpathellipse{\pgfpoint{14\du}{0\du}}{\pgfpoint{1\du}{0\du}}{\pgfpoint{0\du}{1\du}}
\pgfusepath{stroke}
\node at (14\du,0\du){};

\pgfsetlinewidth{0.300000\du}
\pgfsetdash{}{0pt}
\pgfsetdash{}{0pt}
\pgfsetbuttcap

{\draw (-25\du,0\du)--(-17\du,0\du);}
{\draw (-15\du,0\du)--(-7\du,0\du);}
{\draw (-5\du,0\du)--(3\du,0\du);}
{\draw (4.65\du,0.7\du)--(13.35\du,0.7\du);}
{\draw (4.65\du,-0.7\du)--(13.35\du,-0.7\du);}


{\pgfsetcornersarced{\pgfpoint{0.300000\du}{0.300000\du}}
\pgfsetstrokecolor{dialinecolor}
\draw (7\du,-1.2\du)--(10.8\du,0\du)--(7\du,1.2\du);}

\node[anchor=west] at (-34\du,0\du){\Large $\cM$};


\node[anchor=south] at (-16\du,1.1\du){$\large \jmath$};

\node[anchor=south] at (-6\du,1.1\du){$\large \jmath_t$};



\end{tikzpicture} &\ar!<1ex,-1ex>;[rr]!<-9ex,-1ex>&&&\ifx\du\undefined
  \newlength{\du}
\fi
\setlength{\du}{3.3\unitlength}
\begin{tikzpicture}[transform canvas={scale=0.8}]
\pgftransformxscale{1.000000}
\pgftransformyscale{1.000000}

\definecolor{dialinecolor}{rgb}{0.000000, 0.000000, 0.000000} 
\pgfsetstrokecolor{dialinecolor}
\definecolor{dialinecolor}{rgb}{0.000000, 0.000000, 0.000000} 
\pgfsetfillcolor{dialinecolor}


\pgfsetlinewidth{0.300000\du}
\pgfsetdash{}{0pt}
\pgfsetdash{}{0pt}

\pgfpathellipse{\pgfpoint{-26\du}{0\du}}{\pgfpoint{1\du}{0\du}}{\pgfpoint{0\du}{1\du}}
\pgfusepath{stroke}
\node at (-26\du,0\du){};
\pgfpathellipse{\pgfpoint{-26\du}{0\du}}{\pgfpoint{1\du}{0\du}}{\pgfpoint{0\du}{1\du}}
\pgfusepath{fill}
\node at (-26\du,0\du){};

\pgfpathellipse{\pgfpoint{-16\du}{0\du}}{\pgfpoint{1\du}{0\du}}{\pgfpoint{0\du}{1\du}}
\pgfusepath{stroke}
\node at (-16\du,0\du){};

\pgfpathellipse{\pgfpoint{-6\du}{0\du}}{\pgfpoint{1\du}{0\du}}{\pgfpoint{0\du}{1\du}}
\pgfusepath{stroke}
\node at (-6\du,0\du){};

\pgfpathellipse{\pgfpoint{4\du}{0\du}}{\pgfpoint{1\du}{0\du}}{\pgfpoint{0\du}{1\du}}
\pgfusepath{stroke}
\node at (4\du,0\du){};

\pgfpathellipse{\pgfpoint{14\du}{0\du}}{\pgfpoint{1\du}{0\du}}{\pgfpoint{0\du}{1\du}}
\pgfusepath{stroke}
\node at (14\du,0\du){};

\pgfsetlinewidth{0.300000\du}
\pgfsetdash{}{0pt}
\pgfsetdash{}{0pt}
\pgfsetbuttcap

{\draw (-25\du,0\du)--(-17\du,0\du);}
{\draw (-15\du,0\du)--(-7\du,0\du);}
{\draw (-5\du,0\du)--(3\du,0\du);}
{\draw (4.65\du,0.7\du)--(13.35\du,0.7\du);}
{\draw (4.65\du,-0.7\du)--(13.35\du,-0.7\du);}


{\pgfsetcornersarced{\pgfpoint{0.300000\du}{0.300000\du}}
\pgfsetstrokecolor{dialinecolor}
\draw (7\du,-1.2\du)--(10.8\du,0\du)--(7\du,1.2\du);}

\node[anchor=west] at (18\du,0\du){\Large$\cU'_t$};


\node[anchor=south] at (-16\du,1.1\du){$\large \jmath$};

\node[anchor=south] at (-6\du,1.1\du){$\large \jmath_t$};



\end{tikzpicture} \\
&&&\ifx\du\undefined
  \newlength{\du}
\fi
\setlength{\du}{3.3\unitlength}
\begin{tikzpicture}[transform canvas={scale=0.75}]
\pgftransformxscale{1.000000}
\pgftransformyscale{1.000000}

\definecolor{dialinecolor}{rgb}{0.000000, 0.000000, 0.000000} 
\pgfsetstrokecolor{dialinecolor}
\definecolor{dialinecolor}{rgb}{0.000000, 0.000000, 0.000000} 
\pgfsetfillcolor{dialinecolor}


\pgfsetlinewidth{0.300000\du}
\pgfsetdash{}{0pt}
\pgfsetdash{}{0pt}

\pgfpathellipse{\pgfpoint{-16\du}{0\du}}{\pgfpoint{1\du}{0\du}}{\pgfpoint{0\du}{1\du}}
\pgfusepath{stroke}
\node at (-16\du,0\du){};

\pgfpathellipse{\pgfpoint{-6\du}{0\du}}{\pgfpoint{1\du}{0\du}}{\pgfpoint{0\du}{1\du}}
\pgfusepath{stroke}
\node at (-6\du,0\du){};
\pgfpathellipse{\pgfpoint{-6\du}{0\du}}{\pgfpoint{1\du}{0\du}}{\pgfpoint{0\du}{1\du}}
\pgfusepath{fill}
\node at (-6\du,0\du){};

\pgfpathellipse{\pgfpoint{4\du}{0\du}}{\pgfpoint{1\du}{0\du}}{\pgfpoint{0\du}{1\du}}
\pgfusepath{stroke}
\node at (4\du,0\du){};

\pgfpathellipse{\pgfpoint{14\du}{0\du}}{\pgfpoint{1\du}{0\du}}{\pgfpoint{0\du}{1\du}}
\pgfusepath{stroke}
\node at (14\du,0\du){};

\pgfsetlinewidth{0.300000\du}
\pgfsetdash{}{0pt}
\pgfsetdash{}{0pt}
\pgfsetbuttcap

{\draw (-15\du,0\du)--(-7\du,0\du);}
{\draw (-5\du,0\du)--(3\du,0\du);}
{\draw (4.65\du,0.7\du)--(13.35\du,0.7\du);}
{\draw (4.65\du,-0.7\du)--(13.35\du,-0.7\du);}


{\pgfsetcornersarced{\pgfpoint{0.300000\du}{0.300000\du}}
\pgfsetstrokecolor{dialinecolor}
\draw (7\du,-1.2\du)--(10.8\du,0\du)--(7\du,1.2\du);}

\node[anchor=west] at (18\du,0\du){\Large $\cM_{F_t}$
};

\node[anchor=south] at (-16\du,1.1\du){$\large \jmath$};

\node[anchor=south] at (-6\du,1.1\du){$\large \jmath_t$};



\end{tikzpicture} \quad&&}
$$
\end{proof}

\section{Some remarks on Splitting conjecture $\bm{(\r)}$}

In this section we briefly discuss the Splitting conjecture $(\r)$. Let us start with the  following definition.

\begin{definition}
Let $X$ be a rational homogeneous manifold of Picard number one. We define:
$$
\r(X):=\left\{\begin{array}{l}\vspace{0.2cm}\min\{\r(\pi)|\,\,\pi\mbox{ universal bundle on }X\},\mbox{ if }X\neq \P^1\\
+\infty,\mbox{ if }X= \P^1.
\end{array}\right.
$$
\end{definition}

\begin{remark}\label{rem:stronger}
The fact that the Coxeter number of a connected Dynkin diagram is always bigger than the number of its nodes (see Table \ref{tab:fundeg}) implies that $\r(X)< \Cox(X)$, for $X$ rational homogeneous of Picard number one, different from $\P^1$. However, this does not imply the Conjecture $(\r)$ to be stronger than the Conjecture $(\Cox)$. In fact the logical implications between the two conjectures depend on $X$ and on the fiber $G/B$ of the bundle $\pi$ --see Table (\ref{tab:impli}) for some examples--.

\renewcommand*{\arraystretch}{1.3}
\begin{table}[h]\centering
\begin{tabular}{|c|c|c|c|c|c|c|c|c|}\cline{3-9}
\multicolumn{2}{c|}{\multirow{2}{*}{}}&\multicolumn{7}{c|}{$X$}\\
\cline{3-9}
\multicolumn{2}{c|}{}&$\DA_m(1)$&$\DB_m(1)$&$\DB_m(m)$&$\DC_m(1)$&$\DC_m(m)$&$\DD_m(1)$&$\DD_m(m)$\\
\hline
\multirow{4}{*}{$\cD$}&$\DA_n$&$(\r)\LRa(\Cox)$&$(\Cox)$&$(\r)\LRa(\Cox)$&$(\Cox)$&$(\r)\LRa(\Cox)$&$(\Cox)$&$(\r)\LRa(\Cox)$\\
\cline{2-9}
&$\DB_n$&$(\r)$&$(\r)\LRa(\Cox)$&$(\r)$&$(\r)\LRa(\Cox)$&$(\r)$&$(\r)$&$(\r)$\\
\cline{2-9}
&$\DC_n$&$(\r)$&$(\r)\LRa(\Cox)$&$(\r)$&$(\r)\LRa(\Cox)$&$(\r)$&$(\r)$&$(\r)$\\
\cline{2-9}
&$\DD_n$&$(\r)$&$(\Cox)$&$(\r)$&$(\Cox)$&$(\r)$&$(\r)\LRa(\Cox)$&$(\r)$\\
\hline
\end{tabular} 
\caption{Which conjecture ($(\Cox)$ or $(\r)$) is stronger? }
\label{tab:impli}
\end{table}

\end{remark}
\noindent The next result shows that the Conjecture $(\r)$ holds for flag bundles of type $\DA$. 

\begin{proposition}\label{prop:evidenceAk}
Let $X$ be a rational homogenous manifold of Picard number one, quotient of a simple algebraic  group $\ol G$ and $\cM$ its family of $\ol G$-isotropic lines. Let $G$ be a simple algebraic group of type $\DA_n$. If $\pi: Y \to X$ is a $G/B$-bundle, uniform with respect to $\cM$ and $\r(X)>\r(\pi)$, then $\pi: Y \to X$ is diagonalizable.
\end{proposition}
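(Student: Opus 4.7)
The plan is to apply Proposition~\ref{prop:diagcrit}: it suffices to show that for every $r\in\Delta\setminus I_0$, the family $\cM_x$ admits no non-constant morphism to $\cD(r)=\DA_n(r)$. Writing $X=\ol\cD(\jmath)$ and arguing as in the proof of Theorem~\ref{thm:main}, I decompose $\cM_x\cong\prod_{\jmath_t\in\ngb(\jmath)}\cM_{F_t,x}$, where each $F_t=\ol\cD_{J_t}(\jmath)$ is an extremal-node Picard-rank-one rational homogeneous manifold and $\cM_{F_t,x}\cong\ol\cD_{J_t\setminus\{\jmath\}}(\jmath_t)$ is a Picard-rank-one rational homogeneous manifold whose Dynkin diagram has $m_t$ nodes, with $m_t$ the size of the connected component of $\ol\cD\setminus\{\jmath\}$ containing $\jmath_t$ (if $\jmath$ itself is extremal there is only one such factor, namely $\cM_x$). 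Since a morphism from a product is constant iff its restriction to every factor is, it suffices to rule out non-constant morphisms $M:=\cM_{F_t,x}\to\DA_n(r)$ for every $t$.

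The hypothesis $\r(X)>n$ translates to $\min_t m_t>n$, so every factor satisfies $m_t\ge n+1$. I then invoke Proposition~\ref{prop:classical} with target $\cD=\DA_n$: since $\Cox(\DA_n)=n+1$ and no additional good-divisibility hypothesis is required in type $\DA_n$, the proposition forces every morphism $M\to\DA_n(r)$ to be constant as soon as $\ed(M)\ge n+1$. The argument therefore reduces to the cohomological inequality $\ed(M)\ge m_t$ for each factor $M$, which combined with $m_t\ge n+1$ gives $\ed(M)\ge n+1$ as required.

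For classical $\ol G$, every such $M=\cM_{F_t,x}$ occurs as an entry of Table~\ref{tab:gded}, and a direct inspection confirms $\ed(M)\ge m_t$ in each row. For exceptional $\ol G$, the same inequality must be verified for the finite list of $\cM_{F_t,x}$'s arising from the extremal-node quotients of $\DE_6,\DE_7,\DE_8,\DF_4,\DG_2$, using the Schubert-calculus presentations of their cohomology rings described in Section~\ref{sec:cohomflags}. This exceptional case analysis is the principal technical obstacle: each individual subcase is a routine computation of effective classes, but the enumeration is lengthy and yields no further conceptual insight beyond the techniques already developed in Section~\ref{sec:cohomflags}.
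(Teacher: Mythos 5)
Your reduction via Proposition~\ref{prop:diagcrit} and the product decomposition of $\cM_x$ is the right frame, and your treatment of classical $\ol G$ is sound (the paper gets there faster: since $G$ is of type $\DA_n$ one has $\Cox(\pi)=\r(\pi)+1\le\r(X)<\Cox(X)$, so Theorem~\ref{thm:main} applies verbatim). The gap is in the exceptional case. There you propose to apply Proposition~\ref{prop:classical}, which requires the bound $\ed(\cM_{F_t,x})\ge n+1$ for the varieties in Table~\ref{tab:Ak}, namely $\DD_{m-1}(m-1)$, $\DA_{m-1}(3)$, $\DE_6(6)$, $\DE_7(7)$, $\DC_3(3)$ and $\DB_3(3)$. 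Section~\ref{sec:cohomflags} gives you presentations of cohomology rings only for classical groups, and the only effective good divisibility actually computed in the paper beyond spaces with $\HH^{2i}\cong\R$ in low degree is $\ed(\G(1,m))=m$ (Lemma~\ref{lem:glines2}), whose proof already needs the Littlewood--Richardson rule and Pieri. Determining $\ed$ for spinor varieties, for Grassmannians $\DA_{m-1}(3)$ of planes, and especially for $\DE_6(6)$ and $\DE_7(7)$ is not a ``routine computation of effective classes'': the effective cones and intersection products of Schubert classes in the exceptional cases are substantially harder, and nothing in the paper supplies them. As written, the exceptional case of your argument rests on unproved inequalities.

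The paper avoids all of this with an elementary observation you do not use: in every exceptional case of Table~\ref{tab:Ak} one checks
$$\dim(\cM_x)>\left(\frac{\r(X)}{2}\right)^2\ge\left(\frac{n+1}{2}\right)^2\ge (n-r+1)r=\dim(\DA_n(r))$$
for $n<\r(X)$, and since $\cM_x$ has Picard number one, any non-constant morphism $\cM_x\to\DA_n(r)$ would be finite onto its image, which is impossible when the target has strictly smaller dimension. No cohomological input is needed. If you want to salvage your route, you would have to either carry out the missing $\ed$ computations (a serious project in itself) or replace that step by this dimension count.
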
 

\begin{proof}
If the Dynkin diagram of $G$ is of type $\DA_n$ then it holds that $\Cox (\pi)=\r(\pi)+1$.  If we impose $\r(\pi)<\r(X)$, then we have $\Cox(\pi)< \Cox(X)$ (since $\r(X)< \Cox(X)$ holds for every $X$, see Remark \ref{rem:stronger}). Hence, if moreover $\ol G$ is a classical group, the statement follows from Theorem \ref{thm:main}. 

We are left with the case in which $X$ is a quotient of a group $\ol G$ of exceptional type. Moreover, arguing verbatim as in the second part of the proof of Theorem \ref{thm:main}, it is enough to prove the statement in the case in which $X$ is defined by the marking of an extremal node $\jmath$ on a Dynkin diagram $\cD$ of exceptional type, that is, in the cases collected in Table \ref{tab:Ak} below.

\renewcommand*{\arraystretch}{1.3}
\begin{table}[h]\centering
\begin{tabular}{|c|c|c|c|}\hline
$X$   &$\r(X)$&  $ \cM_x$& $\dim \cM_x$ \\ 
\hline
$\DE_m(1)$ &$m-1$ &$\DD_{m-1}(m-1)$ &$(m-1)(m-2)/2$   \\
\hline
{$\DE_m(2)$} &$m-1$ &$\DA_{m-1}(3)$ &$3(m-3)$   \\
\hline
$\DE_7(7)$ &$6$ &$\DE_{6}(6)$ &$16$ \\
\hline
$\DE_8(8)$ &$7$ &$\DE_{7}(7)$ &$27$ \\
\hline
$\DF_4(1)$ &$3$& $\DC_{3}(3)$ & $6$ \\
\hline
{$\DF_4(4)$} &$3$& $\DB_3(3) $ & 6\\
\hline
\end{tabular} 
\caption{Special groups}
\label{tab:Ak}
\end{table}

Once again, by Proposition \ref{prop:diagcrit}, it is enough to prove the constancy of maps from $\cM_x$ to $\DA_n(r)$. In any of the cases, one can check that, for $n<\r(X)$:
$$\dim(\cM_x)>\left(\dfrac{\r(X)}{2}\right)^2\geq\left(\dfrac{n+1}{2}\right)^2\geq (n-r+1)r=\dim(\DA_n(r)).$$ Since $\cM_x$ has Picard number equal to one, this concludes the proof.
\end{proof}

\begin{remark}
The above statement is sharp in the case in which $X=\cD(\jmath)$ such that the smallest connected component of the Dynkin diagram $\cD\setminus\{\jmath\}$ is of type $\DA_n$. In fact, in this case, the corresponding universal bundle has rank $\r(X)$. Conjecturally, these should be the only non diagonalizable uniform bundles of the lowest possible rank. This is known to be true in the cases $X=\DA_m(r)$ (see \cite[Theorem 4.1]{MOS1}), and $X=\DD_m(m)$ (see \cite[Proposition 4.5]{MOS1}).\end{remark}

\begin{remark}
A complete proof of Conjecture $(\r)$ cannot be based on cohomological conditions on maps $\cM_x\to \cD(r)$, as the ones we have used here. For instance, there exist obvious nonconstant maps from $\DA_{m}(1)=\P^m$ to $\DC_n(1)=\P^{2n-1}$, for $n<m\leq 2n-1$. The example below (which is a generalization of \cite[Example~ 1]{Tango}) shows that we have a similar situation in the cases in which $G$ is of type $\DB_n$ or $\DD_n$. We expect that a global view on the map $s_I:\cU\to Y_I$ should provide stronger diagonalizability conditions on uniform flag bundles.
\end{remark}

\begin{example}\label{ex:mapstoquadrics} Consider the second Veronese embedding $v_2(\mathbb{P}^n) \subset \mathbb{P}(S^2\C^{n+1})$.
Given homogeneous coordinates $(x_0:\dots:x_n)$ in $\P(\C^{n+1})$, the polynomials $Z_{i,j}:=x_ix_j$  provide a set of homogeneous coordinates in $\P(S^2\C^{n+1})$, and a set of generators of the ideal of $v_2(\mathbb{P}^n)$ of the form $Z_{i,j}Z_{k,l}-Z_{i,l}Z_{k,j}$. 

Set, for $n$ odd and even respectively:
$$
F_{odd}:=\sum_{i=0}^{(n-1)/2} (Z_{2i,2i}Z_{2i+1,2i+1}-Z_{2i,2i+1}^2)=0 $$
$$F_{even}:=\sum_{i=0}^{(n-2)/2} (Z_{2i,2i}Z_{2i+1,2i+1}-Z_{2i,2i+1}^2)+(Z_{n-2,n-1}Z_{n,n}-Z_{n-2,n}Z_{n,n-1})=0
$$
The vertex of $F_{odd}$ (resp. $F_{even}$) has codimension $3(n+1)/2$ (resp. $3(n+2)/2$), being defined respectively by the linear equations:
$$Z_{j,j}=Z_{2i,2i+1}=0,\quad 0 \leq j,2i+1 \leq n,$$
$$Z_{j,j}=Z_{2i,2i+1}=Z_{n-1,n}=Z_{n-2,n}=0,\quad 0 \leq j,2i+1 \leq n.$$
Since, in each case, the vertex $V$ of the quadric is contained in the subset given by the equations $Z_{j,j}=0$, it follows that $V\cap v_2(\mathbb{P}^n)=\emptyset$. Hence, the linear projection of $v_2(\mathbb{P}^n)$ from $V$ is contained in a smooth quadric, of dimension $(3n-1)/2$, for $n$ odd, or $(3n+2)/2$, for $n$ even. Summing up, we have constructed non constant morphisms from $\mathbb{P}^n$ to $\mathbb{Q}^{3\lfloor \frac{n}{2} \rfloor+1}$, for every $n$.
\end{example}

\begin{problem}
It is known, see \cite[Theorem 4.1]{L3}, that  there are no non constant maps from $\mathbb{P}^{n}$ to $\mathbb{Q}^{n}$. Find an optimal bound $b(n)$ such that the only morphisms $f:\mathbb{P}^n \to  \mathbb{Q}^{m}$ with $m \leq b(n)$ are the constant maps.
\end{problem}


\bibliographystyle{plain}
\bibliography{biblio}

\end{document}